\documentclass[12pt,reqno]{amsart}
\usepackage{amssymb,amsmath,dsfont}
\usepackage{pgfplots}
\pgfplotsset{compat=1.14}
\usepackage{enumitem}
\usepackage{calc}
\usepackage{mathrsfs}

\title[On the maximal dimension of an irreducible representation of $S_N$]{On the maximal dimension of an irreducible representation of the symmetric group}
\date{}

\author{Amol Aggarwal}
\author{Dor Elboim}

\usepackage[margin=1in]{geometry}
\usepackage[all]{xy}
\usepackage{amssymb}
\usepackage{amsfonts}
\usepackage{amsthm}
\usepackage{amsmath}
\usepackage{hyperref}
\usepackage{mathtools}
\usepackage{url}
\usepackage{xcolor}
\usepackage{dsfont}
\usepackage{pgfplots}
\usepackage{bbm}

\newtheorem{thm}{Theorem}
\newtheorem*{thm*}{Theorem}

\newtheorem{lem}{Lemma}[section]  
\newtheorem{prop}[lem]{Proposition}
\newtheorem{cor}[lem]{Corollary}

\theoremstyle{remark}

\DeclareMathOperator{\len}{len}

\numberwithin{equation}{section}

\begin{document}

\begin{abstract}
    We prove that the maximal dimension $d_N$ of an irreducible representation of the symmetric group $S_N$ satisfies
        $$d_N=\sqrt{N!} \, e^{-(\mathfrak{d}+o(1))\sqrt{N} }, \quad N\to \infty,$$
    for some constant $\mathfrak{d}>0$. This answers a question raised by Vershik--Kerov in 1985.
\end{abstract}

\maketitle
\tableofcontents 

\section{Introduction}

Let $S_N$ denote the symmetric group on a set of $N \ge 1$ elements. The irreducible representations of $S_N$ are indexed by the set $\mathbb{Y}_N$ of partitions of size $N$. For any $\lambda \in \mathbb{Y}_N$ let $\dim \lambda$ denote the dimension of the irreducible representation of $S_N$ associated with $\lambda$. A natural question is how the maximal dimension 
\begin{flalign*} 
d_N = \max_{\lambda \in \mathbb{Y}_N} \dim \lambda,
\end{flalign*} 

\noindent of an irreducible representation of $S_N$ grows with $N$. Indeed, this dates back to the report of Bivins--Metropolis--Stein--Wells \cite{CSG}, which was among the earlier works to advocate and implement the use of computer calculations to better understand combinatorial phenomena. Ironically, for this specific question, numerical data was not always decisive. 

Since we have $\sum_{\lambda \in \mathbb{Y}_N} (\dim \lambda)^2 = |S_N| = N!$ and $|\mathbb{Y}_N| \le e^{(\pi \sqrt{2/3} + o(1)) \sqrt{N}}$, it follows that $d_N \ge e^{-(c_1+o(1)) \sqrt{N}} \sqrt{N!}$ for $c_1 = \pi / \sqrt{6}$. Based on a tabulation of $d_N$ for $N \le 75$, McKay~\cite{mckay1976largest} hypothesized the improved lower bound $d_N \ge N^{-1} \sqrt{N!}$; however, the counterexample $N = 81$ was found soon afterwards. Vershik--Kerov \cite{vershik1985asymptotic} later showed that this was false in a strong asymptotic sense, by proving that $d_N \le e^{-(c_2+o(1)) \sqrt{N}} \sqrt{N!}$, where $c_2 = 2(\pi-2) / \pi^2$. 

Together with the above mentioned lower bound, this led them to ask whether $d_N = e^{-(\mathfrak{d} + o(1)) \sqrt{N}} \sqrt{N!}$ for some constant $\mathfrak{d} > 0$, that is, whether the limit 
\begin{flalign}
\label{dnlimit} 
\displaystyle\lim_{N \rightarrow \infty} \displaystyle\frac{1}{\sqrt{N}} \log  \displaystyle\frac{d_N}{\sqrt{N!}}, 
\end{flalign} 

\noindent exists. In subsequent years, several computational experiments were performed to test this. The first ones of Kerov--Pass \cite{RMSG} led them to predict that \eqref{dnlimit} does exist, while more recent ones of Vershik--Pavlov \cite[Section 3]{vershik2009numerical} led them to suggest that it might not. Still, it seems to the broader opinion that the limit \eqref{dnlimit} should exist (and this is supported by later numerics of Duzhin--Vasilyev \cite{ABND}). See, for example, the survey \cite[Section 2.2]{SSA} of Pak (where this prediction is referred to as the Vershik--Kerov--Pass conjecture). 

In this paper we confirm the existence of this limit. 

\begin{thm}\label{thm:main}
    There exist constants $\mathfrak{d} > 0$ and $C>0$ such that, for any integer $N \ge 2$,  
    \begin{flalign}
    \label{dnlimit2} 
        \Big| \displaystyle\frac{1}{\sqrt{N}} \log \displaystyle\frac{d_N}{\sqrt{N!}}  + \mathfrak{d} \Big| \le \displaystyle\frac{C}{\log N}.
    \end{flalign}
    
\end{thm}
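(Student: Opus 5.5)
We would prove Theorem~\ref{thm:main} by establishing an approximate sub/superadditivity for the quantity
\[
a_N=-\log\bigl(d_N/\sqrt{N!}\bigr),
\]
in the scaling variable $\sqrt N$, and then applying a Fekete-type argument with explicit error control. The natural tool is the hook length formula, $\dim\lambda = N!/\prod_{\square\in\lambda} h(\square)$. It gives
\[
\log\frac{\dim\lambda}{\sqrt{N!}}=\tfrac12\log N!-\sum_{\square}\log h(\square).
\]
After the Logan--Shepp/Vershik--Kerov rescaling by $\sqrt N$, the leading order is governed by the hook functional. Its minimizer is the limit shape $\Omega$, and on $\Omega$ the leading order is exactly $\sqrt{N!}$. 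Thus $a_N$ is a second-order (order $\sqrt N$) correction, and the task is to show that $a_N/\sqrt N$ converges at rate $1/\log N$.

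\textbf{Step 1 (a priori control of maximizers).} First we would show that any maximizer $\lambda$ of $\dim\lambda$ is close to the limit shape. Concretely, its rescaled profile should be within $N^{-c}$ of $\Omega$ in a suitable norm, with first row and first column of length $(2+o(1))\sqrt N$. This follows from the quadratic growth of the hook functional away from $\Omega$ (via the Sobolev $H^{1/2}$ norm identity of Vershik--Kerov) together with the bound $a_N=O(\sqrt N)$ already known from the introduction's discussion.

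\textbf{Step 2 (gluing/splitting).} The key structural claim is a locality statement for $a_N$. Because the shape is determined to leading order, the $\sqrt N$ correction comes from discrete fluctuations of the boundary. These are local: a window of the boundary of length $\ell$ (in unrescaled units) contributes about $\ell$ times a local constant depending on the slope of $\Omega$ there. So we would prove:
\begin{itemize}
\item[(i)] a \emph{lower bound} for $d_N$ by constructing $\lambda$. Split the boundary into mesoscopic pieces, and in each piece copy a near-optimal boundary pattern from a smaller optimizer $\lambda'\in\mathbb Y_M$, suitably sheared to match the slope.
\item[(ii)] a matching \emph{upper bound*} by cutting an optimizer of size $N$ into pieces, each of which yields a competitor for some smaller $M$.
\end{itemize}
Hook lengths interact nonlocally through $\log h$, but after subtracting the contribution of the smooth shape $\Omega$ the interaction between distant boundary pieces decays. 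We would quantify this with the second-order expansion of $\sum\log h$ in terms of the boundary height function, namely a logarithmic-kernel (Coulomb-type) energy. This yields a relation of the form
\[
\Bigl|\frac{a_N}{\sqrt N}-\frac{a_M}{\sqrt M}\Bigr|\le \frac{C}{\log M},\qquad M\le N\le M^{2}\ \text{(say)}.
\]
Iterating along a doubly-exponential sequence $M_k$ then gives a Cauchy sequence with limit $\mathfrak d$ and the stated $C/\log N$ rate.

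\textbf{Step 3 (positivity).} The bound $\mathfrak d\ge c_2>0$ follows from the Vershik--Kerov upper bound quoted above.

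\textbf{Main obstacle.} We expect the main obstacle to be Step 2, namely controlling the logarithmic long-range interaction between boundary pieces. The error terms from the log kernel are exactly what produce a $1/\log$ rate rather than a polynomial one, so they must be bounded sharply. We would first try to write the hook product as a discrete log-gas energy of the interlacing particle configuration (the boundary's up/down steps). We would then use the fact that the error from modifying the configuration in a window of size $\ell$ while keeping its ``charge'' (the area and endpoints) fixed is a dipole-type perturbation, whose interaction with the rest is $O(\ell)$ uniformly and $o(\ell)$ in aggregate. Summing these over scales is what should produce the $\log N$ in the denominator.
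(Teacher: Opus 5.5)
\textbf{Verdict: genuine gap in Step 2.} You correctly identify the long-range interaction between glued pieces as the obstacle, but you resolve it by assertion. The claim that replacing a window of length $\ell$, with endpoints and area fixed, produces an interaction that is ``$o(\ell)$ in aggregate'' is a flatness (rigidity) statement for near-optimizers, and nothing in your argument supplies it.

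Fixing the charge improves only the far-field decay. The near-field part of the interaction of a perturbation $h$ supported on a window $W$ is roughly $\int_W h(x)^2\,\mathrm{dist}(x,\partial W)^{-1}dx$, which is of order $\ell$ as soon as $h$ has size $\sqrt{\ell}$ on $W$. The a priori information cannot exclude this. The $H^{1/2}$ energy is scale invariant: a deviation of $f-\rho x$ of height $h$ at scale $\ell$ costs only about $h^2$. So an energy bound $\theta\le C\ell$ is compatible with deviations of order $\sqrt\ell$ at the gluing scale, and your Step 1 is weaker still, since it controls the sup norm only to order $N^{1/2-c}$ before rescaling. For such pieces, the cross terms between adjacent copies are of the same order $\ell$ as the main term. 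These cross terms are controlled by, and for tent-like profiles comparable to, $\int_0^\ell \bar g(x)^2/x\,dx+\int_0^\ell(\bar g(\ell)-\bar g(x))^2/(\ell-x)\,dx$. The glued competitor therefore overshoots by a constant per unit length, and you get neither the rate nor even existence of the limit.

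Two further problems compound this. First, a $\pm1$ lattice path cannot be ``sheared'' to another slope except by flipping a few steps. So patterns from a size-$M$ optimizer can only be reused at the same slope and must be replicated about $\sqrt{N/M}$ times, which is again the concatenation problem. Second, because you compare global optimizers $a_N$ and $a_M$ directly, you need such a gluing in \emph{both} directions.

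The missing idea in the paper is to obtain flat seeds by \emph{selection} rather than by proving rigidity, and to do so for the fixed-slope local problem $\sigma_n^\rho=\min\Theta_n^\rho$. That problem is exactly superadditive by restriction, so Fekete gives $\sigma^\rho$ and $\sigma_n^\rho/n\le\sigma^\rho$ for free. The global lower bound on $\Theta_N$ then follows from the local rate by pure restriction to windows of length $N^{0.01}$. Gluing is needed only for $\sigma^\rho\le\sigma_n^\rho/n+C/\log n$ and for the matching global construction. The selection argument runs as follows.
\begin{enumerate}
\item Take a local minimizer $f$ and split $\theta_n^\rho(f)\le Cn$ over the $\asymp\log n$ dyadic distance scales between $n^{1/3}$ and $n^{2/3}$. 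Pigeonhole a scale $2^k$ with $\sum_{j\le k}2^{j/3}I_j(f)\le C2^{k/3}n/\log n$. This pigeonhole, not a log-kernel interaction estimate, is the source of the $1/\log n$.
\item Average over starting points to find a window of length $m\asymp2^k$ that is flat in the integrated sense $\int_0^m\bar g(x)^2/x\,dx\le Cm/\log n$, together with its mirror image.
\item Require also that the window's cost per unit length is at most $\sigma_n^\rho/n+A^{2/3}/\log m$. Bounding the set of expensive windows uses the induction hypothesis $\sigma_m^\rho/m\ge\sigma^\rho-A/\log m$.
\item Flip $O(\sqrt{m/\log n})$ up steps so the window has the prescribed endpoint.
\item Concatenate such seeds to get $\sigma^\rho\le\sigma_n^\rho/n+CA^{2/3}/\log n$, which closes the induction.
\end{enumerate}
Your doubly exponential iteration would be an acceptable substitute for that induction once a comparison like yours is established, but the comparison itself requires some version of this selection argument.

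Step 1 is not needed: the paper uses only $\bar\theta_N\ge0$ for one bound and a competitor equal to $|x|$ outside $[-2\sqrt N,2\sqrt N]$ for the other. Also, the bounds you cite give, via $\bar\theta_N\le C\sqrt N$, only $\lambda_1=O(\sqrt N)$, not $(2+o(1))\sqrt N$.
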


We do not have an explicit formula for the constant $\mathfrak{d}$ in Theorem \ref{thm:main}, nor do we have any reason to suspect that one should exist. The error $C(\log N)^{-1}$ in \eqref{dnlimit2} is not expected to be optimal; we would speculate that it should be closer to $CN^{-1/2}$. 

The existence of the limit \eqref{dnlimit} is closely related to the study of log gases. For any $\lambda = (\lambda_1, \lambda_2, \ldots , \lambda_{\ell}) \in \mathbb{Y}_N$, the hook length formula implies that 
\begin{flalign*}
\dim \lambda = N! \displaystyle\prod_{1 \le i < j \le \ell} (\lambda_i - \lambda_j - i +j) \cdot  \displaystyle\prod_{i=1}^{\ell} (\lambda_i - i + \ell)!^{-1}.
\end{flalign*} 

\noindent For $\beta>0$, let $\mathbb{P}_{\beta}^N$ denote the probability measure that gives $\lambda \in \mathbb{Y}_N$ weight proportional to $(\dim \lambda)^{\beta}$. Then $\mathbb{P}_{\beta}^N$ can be viewed as a discrete log gas, namely, as a random system of particles on $\mathbb{Z}$ interacting through a logarithmic potential. Theorem \ref{thm:main} is equivalent to the convergence of $N^{-1/2} \log (N!^{-1/2} \dim \lambda)$ to a constant, almost surely as $N$ tends to $\infty$, for $\lambda$ sampled under $\mathbb{P}_{\infty}^N$. Vershik \cite{vershik1989statistical} predicted that this also holds for any $\beta > 0$. 

The Plancherel measure for $S_N$ is $\mathbb{P}_2^N$, which was shown to be a determinantal point process by Borodin--Okounkov--Olshanski \cite{AMSG} and Johansson \cite{DOPM}. Facilitated by this, Bufetov \cite{CTFM} proved $N^{-1/2} \log (N!^{-1/2} \dim \lambda)$ has an almost sure limit, for $\lambda$ sampled under $\mathbb{P}_{\beta}^N$, when $\beta=2$; this was extended to certain other determinantal, Plancherel-type measures on partitions by Mkrtchyan \cite{EM}. However, the determinantal structure for $\mathbb{P}_{\beta}^N$ is lost when $\beta \ne 2$. 

Away from this determinantal point $\beta=2$, more is known for continuum log gases, namely, when the $(\lambda_i)$ are not restricted to be integers. In this continuous setting, the counterpart of the existence of the limit \eqref{dnlimit} is closely related to pinpointing the local spacings between the particles $(\lambda_i)$. This was addressed by Sandier--Serfaty \cite{LRE}, who showed that, as $\beta$ tends to $\infty$, the particles $(\lambda_i)$ locally crystallize around an evenly spaced lattice. Analogs of these results have seemingly not been developed for discrete log gases, such as $\mathbb{P}_{\beta}$, where certain differences arise (for example, this crystallization phenomenon cannot quite be valid as stated at points where the particle density $(\lambda_i)$ is irrational, as the $(\lambda_i)$ must be integers). 

We will comment further on the proof of Theorem \ref{thm:main} in Section \ref{Proof0} below. We find it plausible that this method might also be of use to study the predicted \cite{vershik1989statistical} almost sure limit of $N^{-1/2} \log (N!^{-1/2} \dim \lambda)$, for $\lambda$ sampled under $\mathbb{P}_{\beta}$ at $\beta \notin \{ 2, \infty \}$. However, we will not pursue this here.

\subsection*{Acknowledgements} 

Amol Aggarwal heartily thanks Greta Panova for valuable discussions on this question. The work of Amol Aggarwal was partially supported by Packard Fellowship for Science and Engineering.

\section{Preliminaries}

\subsection{Partitions and the hook length formula}
As mentioned above, irreducible representations of $S_N$ are in bijection with partitions of $N$ (see, e.g., \cite[Theorem~2.4.6]{sagan2001symmetric}).
Recall that a partition $\lambda$ of $N$ is a vector
$\lambda=(\lambda_1,\dots,\lambda_k)$ of positive integers with
$\lambda_1\ge \lambda_2\ge \cdots \ge \lambda_k$ such that
$\lambda_1+\cdots+\lambda_k=N$. The Young diagram of
$\lambda=(\lambda_1,\dots,\lambda_k)$ is a union of squares, called cells, in the plane given by
\begin{equation}
   Y(\lambda)
   :=
   \bigcup_{i=1}^k \bigcup_{j=1}^{\lambda_i}
   [j-1,j]\times[-i,-i+1].
\end{equation}
The $i$-th row of the diagram consists of $\lambda_i$ cells. See Figure~\ref{fig:1}.

\begin{figure}[ht]
\centering


\def\PartList{27,21,18,17,13,12,12,11,10,10,8,6,5,4,4,4,3,3,2,2,2,2,1,1,1,1}

\pgfmathsetmacro{\cell}{0.21}
\pgfmathsetmacro{\rcell}{\cell/sqrt(2)}
\pgfmathsetmacro{\R}{2*sqrt(200)}   

\newcommand{\DrawYoungEnglish}{
    \foreach[count=\r from 0] \len in \PartList {
        \foreach \c in {1,...,\len} {
            \draw (\c-1,-\r) rectangle (\c,-\r-1);
        }
    }
}

\newcommand{\FillHook}{

    \foreach \c in {5,...,12} {
        \fill[blue,opacity=0.35] (\c-1,-6) rectangle (\c,-7);
    }

    \foreach \r in {7,...,13} {
        \fill[blue,opacity=0.35] (4,1-\r) rectangle (5,-\r);
    }
}

\newcommand{\DrawYoungRussian}{
    \foreach[count=\i from 1] \len in \PartList {
        \foreach \j in {1,...,\len} {
            \draw (\j-\i,\i+\j-2) --
                  (\j-\i+1,\i+\j-1) --
                  (\j-\i,\i+\j) --
                  (\j-\i-1,\i+\j-1) -- cycle;
        }
    }
}

\newcommand{\DrawRussianUpperBoundary}{
    \draw[line width=1pt]
    (-26,26) --
    (-25,27) --
    (-24,26) --
    (-23,25) --
    (-22,24) --
    (-21,23) --
    (-20,24) --
    (-19,23) --
    (-18,22) --
    (-17,21) --
    (-16,20) --
    (-15,21) --
    (-14,20) --
    (-13,19) --
    (-12,20) --
    (-11,19) --
    (-10,18) --
    (-9,17) --
    (-8,18) --
    (-7,17) --
    (-6,18) --
    (-5,17) --
    (-4,18) --
    (-3,19) --
    (-2,18) --
    (-1,19) --
    (0,20) --
    (1,19) --
    (2,18) --
    (3,19) --
    (4,18) --
    (5,19) --
    (6,18) --
    (7,17) --
    (8,18) --
    (9,17) --
    (10,18) --
    (11,19) --
    (12,20) --
    (13,21) --
    (14,20) --
    (15,21) --
    (16,20) --
    (17,21) --
    (18,22) --
    (19,23) --
    (20,22) --
    (21,23) --
    (22,24) --
    (23,25) --
    (24,26) --
    (25,27) --
    (26,28) --
    (27,27);
}

\begin{tikzpicture}[line width=0.35pt]

    \begin{scope}[x=\cell cm,y=\cell cm,shift={(0,26)}]

        \draw (0,-28) -- (0,0);
        \draw (0,0) -- (29,0);

        \FillHook
        \DrawYoungEnglish
    \end{scope}

    \begin{scope}[shift={(11.3cm,0.75cm)},x=\rcell cm,y=\rcell cm]

        \DrawYoungRussian

        \DrawRussianUpperBoundary

        \draw[line width=1pt] (-26,26) -- (-\R-1.2,\R+1.2);
        \draw[line width=1pt] (27,27) -- (\R+1.2,\R+1.2);

        \draw[red,very thick,samples=400,smooth,variable=\t,domain=-pi/2:pi/2]
            plot ({\R*sin(\t r)},
                  {(2*\R/pi)*(\t*sin(\t r)+cos(\t r))});

        \node[scale=0.85] at (2.5,21.5) {$f\in \mathcal S_N$};
        \node[scale=0.85] at (-8.5,20.3) {{\color{red}$\Omega_N$}};

    \end{scope}

\end{tikzpicture}
\caption{The left figure is the Young diagram of a partition of size 200 sampled from the Plancherel measure. The hook of cell $(7,5)$ appears in blue with the hook length being $14$.
The right figure shows the function $f\in \mathcal S _N$ corresponding to that partition. On top of it in red is the rescaled version of limiting function $\Omega $.}
\label{fig:1}
\end{figure}

We denote by $\dim \lambda $ the dimension of the irreducible representation corresponding to $\lambda $. The hook length formula (see, e.g., \cite[Theorem~3.10.2]{sagan2001symmetric}) states that 
\begin{equation}\label{eq:hl}
    \dim \lambda =\frac{N!}{\prod_{(i,j)} h_{i,j}},
\end{equation}
where the product is over the cells of the Young diagram of $\lambda $ and where $h_{i,j}$ is the hook length of cell $(i,j)$. The hook length $h_{i,j}$ is the number of cells in the hook with corner at $(i,j)$. Namely, this is the number of cells in the diagram to the right of $(i,j)$ plus the number of cells below $(i,j)$ plus one (the cell itself). See Figure~\ref{fig:1}.

Hence, finding the representation with maximal dimension corresponds to finding the partition for which the product of hooks $\prod _{(i,j)}h_{i,j}$ is minimal.

\subsection{Step functions}

We parametrize partitions using step functions. Let $\mathcal S$ be the set of continuous, piecewise linear functions $f:\mathbb R \to \mathbb R$ such that $f'(x)\in \{-1,1\}$ for all $x\notin \mathbb Z$ and such that $f(x)=|x|$ when $|x|$ is sufficiently large. This is the set of functions $f$ whose graph is the boundary of a rotated and rescaled Young diagram. See Figure~\ref{fig:1}. We let $\lambda (f)$ be the partition corresponding to $f$. For $f\in \mathcal S$ let
\begin{equation*}
    N(f):= \frac{1}{2}\int _{-\infty }^{\infty } (f(x)-|x|)dx.
\end{equation*}
This is the size of the partition $\lambda (f)$ or the number of cells in the Young diagram of $\lambda (f)$. We let
\begin{equation*}
    \mathcal S_N:=\{f\in \mathcal S: N(f)=N\}
\end{equation*}
be the functions corresponding to partitions of size $N$.

\subsection{The limit shape $\Omega$ }
Taking the logarithm of \eqref{eq:hl} turns the right hand side into a sum of logarithms of hook lengths. This sum is the Riemann sum of the continuous hook integral (see, e.g., \cite[Section~1.12]{romik2015surprising}). Hence, finding the maximal dimension of a partition leads to a continuous variational problem of minimizing the hook integral under an area constraint (see \cite[Section~1.13]{romik2015surprising}). The solution of the variational problem is the \emph{Logan--Shepp} \cite{VRT} and \emph{Vershik--Kerov} \cite{vershik1985asymptotic} limit shape, defined by
\begin{equation}
    \Omega (x):= \begin{cases}
    \frac{2}{\pi }\big( x\arcsin x +\sqrt{1-x^2} \big)\quad &|x|\le 1\\
    |x|\quad &|x|\ge 1    
    \end{cases}
\end{equation}
It follows that the step function corresponding to a partition with close to maximal dimension (including a partition sampled from the Plancherel measure) will be somewhat close to a rescaled version of $\Omega $. Precisely, to match the scale of functions $f\in \mathcal S _N$, we rescale $\Omega $ by  $\Omega _N(x)=\sqrt{4N}\Omega (x/\sqrt{4N})$. See Figure~\ref{fig:1} and Figure~\ref{fig:2}.

\subsection{Expansion around $\Omega $}

The starting point of our analysis is the following result due to \cite[Lemmas~1,2,3,4]{vershik1985asymptotic}. The result gives a formula for the dimension $\dim \lambda (f)$ in terms of the Sobolev norm $H^{1/2}$ of $f-\Omega _N$. To state the result, we will need some notations. For $f\in \mathcal S$, define
\begin{equation}
        \theta _N(f) :=\int _{-\infty } ^\infty \int _{-\infty } ^\infty  \Big( \frac{f(x)-f(y)}{x-y} -\frac{\Omega _N(x)-\Omega _N(y)}{x-y}\Big) ^2 dx\, dy
    \end{equation}
    and
        \begin{equation}
        \bar \theta _N(f) :=\int _{|x|\ge 2\sqrt{N}}  (f(x)-|x|)\,  {\rm arccosh} \Big(\frac{|x|}{2\sqrt{N}} \Big) dx,
    \end{equation}
    where ${\rm arccosh}(x)=\log (x+\sqrt{x^2-1})$. Finally define 
    \begin{equation}\label{eq:tilde1}
        \tilde \theta (f):= \sum _{(i,j)} \varphi (h_{i,j}), \quad \text{where} \quad \varphi (x):=\sum _{k=1}^{\infty } \frac{1}{k(k+1)(2k+1)x^{2k}},
    \end{equation}
and where the sum in the definition of $\tilde \theta $ is over the cells of the Young diagram of $\lambda (f)$.

\begin{prop}[Vershik-Kerov]\label{prop:VK}
    For any $N\ge 1$ and  $f\in \mathcal S _N$ we have that
    \begin{equation}\label{eq:VK}
        -\log \bigg( \frac{\dim ^2\lambda (f) }{(N!)^2(N/e)^{-N}} \bigg)= \log \bigg( \frac{\prod _{(i,j)}h_{i,j}^2}{(N/e)^{N}} \bigg)=\frac{1}{8}\theta _N(f) +\tilde \theta (f) +\bar \theta _N(f).
    \end{equation}
\end{prop}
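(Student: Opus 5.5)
The identity in \eqref{eq:VK} splits into two parts. The first equality is just the hook length formula \eqref{eq:hl}. The second is the substance, and I plan to prove it by writing $\log h$ as an integral plus a correction, summing over cells, and then expressing the resulting "hook integral" through $f$.

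\emph{Step 1 (local correction).} For each integer $h\ge1$ I compare $\log h$ with the average of $\log$ over a unit square. The model is
\[
\iint_{[-1/2,1/2]^2}\log(h+s+t)\,ds\,dt .
\]
Expanding $\log(1+u)$ and summing the series in $1/h$ gives an identity of the form
\[
2\log h=\iint_{\square}2\log(\text{continuous hook})\,dA+\varphi(h)+(\text{terms that telescope}).
\]
This is the classical computation producing $\varphi$; I would verify it by differentiating in $h$ or comparing Taylor coefficients. Summing over cells then gives
\[
\log\prod h_{i,j}^2=\iint_{Y(\lambda)}2\log h(x,y)\,dx\,dy+\tilde\theta(f),
\]
up to boundary terms. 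Here $h(x,y)$ is the continuous hook length in rotated coordinates.

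\emph{Step 2 (hook integral as a quadratic form).} In Russian coordinates I use the fact that $\sigma(x)=(f(x)-|x|)/2$ has $\sigma'$ piecewise $\pm$ constant. The hook integral $\iint\log h$ can be written as
\[
-\tfrac12\iint (1-f'(x)f'(y))\log|x-y|\,dx\,dy+\dots,
\]
and after integrating by parts it becomes the logarithmic energy $-\iint \sigma'(x)\sigma'(y)\log|x-y|$. I then expand around $\Omega_N$. Write $f=\Omega_N+g$, where $\int g=0$ because both shapes have area $N$. The energy splits into three pieces: the constant from $\Omega_N$, which should produce $N\log N-N$ after the $\sqrt{4N}$ rescaling, matching $(N/e)^N$; a linear term; and a quadratic term. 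The quadratic term equals $\frac18\iint\big(\frac{g(x)-g(y)}{x-y}\big)^2$, which is $\frac18\theta_N(f)$, by the standard identity $-\iint g'(x)g'(y)\log|x-y|=\frac12\iint\big(\frac{g(x)-g(y)}{x-y}\big)^2$ for compactly supported $g$.

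\emph{Step 3 (linear term, the main obstacle).} This uses the Euler--Lagrange property of $\Omega$. The logarithmic potential of $\Omega_N'$, namely $U(x)=\int \Omega_N''(y)\,(\dots)\log|x-y|$, is affine, hence constant after using $\int g=0$, on $[-2\sqrt N,2\sqrt N]$. Outside this interval it differs by an explicit function. Computing $U$ from $\Omega''(y)=\frac{2}{\pi\sqrt{1-y^2}}$ gives the classical formula: $\int\frac{\log|x-y|}{\pi\sqrt{1-y^2}}dy$ equals $-\log2$ inside and $\log(|x|+\sqrt{x^2-1})-\log 2$ outside. The outside piece produces exactly $\int_{|x|\ge2\sqrt N}g(x)\,{\rm arccosh}(|x|/2\sqrt N)\,dx=\bar\theta_N(f)$, since there $g=f-|x|$.

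The delicate bookkeeping is in tracking all constants so that the boundary terms from Step 1 and the constant from $\Omega_N$ combine exactly to $\log (N/e)^N$ with no remainder. I would check this on a simple case, such as the single-row partition or $N=1$, to pin down the constants.
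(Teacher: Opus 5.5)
Your plan is correct. Mathematically it is the Vershik--Kerov computation itself: your Step~1 is their Lemma~1 (the formula for $\tilde\theta$), and Steps~2--3 are what their Lemmas~2 and~4 provide (the hook integral as an $H^{1/2}$-type quadratic form plus the ${\rm arccosh}$ boundary term). The paper does not redo any of this. Its proof imports those lemmas and only checks normalizations: it replaces $N!$ by $(N!)^2(N/e)^{-N}$ so that their error term $\epsilon_N$ vanishes, and it rescales $f(x)=\sqrt{4N}L_\lambda(x/\sqrt{4N})$. Your route is self-contained and shows directly why the identity is exact; the paper's route buys brevity.

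When you write it out, correct the following points.

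(i) Step~1 is exact cell by cell. With $s,t$ uniform on $[-\frac12,\frac12]$, the odd moments of $s+t$ vanish and $\mathbb E(s+t)^{2k}=\frac{1}{(k+1)(2k+1)}$. Hence $\iint\log(h+s+t)\,ds\,dt=\log h-\frac12\varphi(h)$ for every integer $h\ge 1$. There are no telescoping or boundary terms, and $\log\prod h_{i,j}^2=2\iint_{Y}\log h+\tilde\theta(f)$ exactly.

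(ii) The coefficient $-\frac12$ in Step~2 is wrong, since it would make the quadratic form negative. Parametrize a point of the diagram by the abscissae $s<t$ of the two ends of its hook. Then the hook length is $t-s$ and $dx\,dy=\frac14(1+f'(s))(1-f'(t))\,ds\,dt$. Writing $f'={\rm sgn}+2\sigma'$, this gives the exact, convergent formula
\[
2\iint_{Y}\log h\,dx\,dy=2\int\sigma(s)\log|s|\,ds-\iint\sigma'(s)\sigma'(t)\log|s-t|\,ds\,dt .
\]
The first term matters. Write $\sigma=\sigma_\Omega+\frac g2$ with $\sigma_\Omega=\frac12(\Omega_N-|x|)$. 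The linear part $\int g(s)\log|s|\,ds$ of the first term cancels the contribution of the $-\delta_0$ in $\sigma_\Omega''=\frac12\Omega_N''-\delta_0$. So the total linear term is exactly $\frac12\int g\,U$, where $U(x)=\int\Omega_N''(y)\log|x-y|\,dy$.

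(iii) $U$ is not merely affine but constant on the support: $U=\log N$ on $[-2\sqrt N,2\sqrt N]$, and $U(x)=\log N+2\,{\rm arccosh}(|x|/2\sqrt N)$ outside. Therefore $\int g=0$ gives exactly $\bar\theta_N(f)$.

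(iv) The constant term is
\[
2\iint_{Y(\Omega_N)}\log h=\int\sigma_\Omega(s)\log|s|\,ds+\tfrac N2\log N=N\log N-N,
\]
by an elementary integral. Equivalently, this is the classical fact that the area-one limit shape has hook integral $-\frac12$. Checking a single case such as $N=1$ would determine this constant only after you also use scaling to know it has the form $N\log N+c_0N$.
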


\begin{proof}
    Let us explain how to derive this formula from \cite{vershik1985asymptotic}. In \cite[Equation~(2)]{vershik1985asymptotic} the authors consider the quantity $\mu _N(\lambda )=\dim ^2(\lambda )/N!$. Here we replace $N!$ with $(N!)^2(N/e)^{-N}$, which is almost the same asymptotically, and define $\mu _N'(\lambda ):=\frac{\dim ^2(\lambda )}{(N!)^2(N/e)^{-N}}$. This is done in order to obtain an exact equality in the statement of the theorem instead of an approximation. The error term $\epsilon _N$ from  \cite[Equations~(3),(5)]{vershik1985asymptotic} becomes $0$ when replacing $\mu _N$ with $\mu _N'$. Using the notations $\theta _{\lambda }$ and $\tilde \theta (\lambda )$ of \cite[Equations~(5)]{vershik1985asymptotic} becomes
    \begin{equation}\label{eq:mu}
        -\log (\mu _N'(\lambda ))= N \theta _\lambda  +\sqrt{N}\tilde \theta (\lambda ).
    \end{equation}
    The formula \eqref{eq:VK} then follows by substituting  \cite[Lemmas~1,2,4]{vershik1985asymptotic} into \eqref{eq:mu}. Indeed, \cite[Lemma~1]{vershik1985asymptotic} gives the formula for $\tilde \theta (\lambda )$. Then, \cite[Lemma~4]{vershik1985asymptotic} with $f=L_{\lambda }-\Omega $ substituted into \cite[Lemma~2]{vershik1985asymptotic} and then into \eqref{eq:mu} gives the formula for $\theta _{\lambda }$. Note that $\tilde \theta (\lambda )$ of \cite{vershik1985asymptotic} is our $\tilde \theta (f)$ divided by $\sqrt{N}$. Also note that \cite{vershik1985asymptotic} work with step functions $L_\lambda $ whose steps are of size $1/\sqrt{4N}$, while our functions $f$ have steps of size $1$. Hence, in order to obtain \eqref{eq:VK}, we rescale by $f(x)=\sqrt{4N}L_{\lambda }(x/\sqrt{4N})$. Writing $L_{\lambda }-\Omega $ in \cite[Lemma~4]{vershik1985asymptotic} instead of $f$ and then changing variables in the second integral by $s=x/\sqrt{4N}$ leads to our $\bar \theta _N(f)$ and in the first integral by $s=x/\sqrt{4N}$ and $t=y/\sqrt{4N}$ leads to our $\theta _N(f)$.
\end{proof}

\subsection{Proof outline}

\label{Proof0} 

By Proposition~\ref{prop:VK}, to find a partition of maximal dimension, it suffices to find a function $f\in \mathcal S_N$ minimizing $\frac{1}{8}\theta _N(f) +\tilde \theta (f) +\bar \theta _N(f)$. We will see at this minimizer that $\bar \theta _N(f)$ is negligible so, to prove Theorem~\ref{thm:main}, one must show that $\frac{1}{8}\theta _N(f) +\tilde \theta (f)=(\mathfrak{d}+o(1))\sqrt{N}$, for some constant $\mathfrak{d}$. We will also see at this minimizer that $\theta _N(f)$ and $\tilde \theta (f)$ are of order $\sqrt{N}$, and that these two terms are effectively in competition with each other. 

Indeed, requiring that $\theta_N (f)$ be small forces $f$ to approximate $\Omega_N$. However, since we cannot have $f = \Omega_N$ (as $\Omega_N$ is curved, while $f$ is associated with a Young diagram), the minimum of $\theta_N (f)$ alone is intuitively attained when $f$ is some rounding of $\Omega_N$. Already in this case, one can verify that $\theta_N (f) \ge c\sqrt{N}$, where the constant $c>0$ depends intricately on this precise choice of rounding of $\Omega_N$ to $f$. Minimizing the sum $\frac{1}{8} \theta_N (f) + \tilde{\theta} (f)$ is more subtle, as making $\tilde{\theta}$ small incentivizes the partition $\lambda(f)$ to have few small hook lengths, causing $f$ to locally look quite ``boxlike'' and thus different from a genuine rounding of $\Omega_N$.

To analyze this optimization problem, we zoom in locally and estimate the optimal contribution to $\frac{1}{8}\theta _N(f)+\tilde \theta (f)$ coming from an interval of length $n\ll \sqrt{N}$ around some point $z$; see Figure~\ref{fig:2}. Near $z$, the ratio $\frac{\Omega _N(x)-\Omega _N(y)}{x-y}$ appearing in $\theta _N(f)$ can be replaced with a fixed slope $\rho =\Omega _N'(z)$, to create the corresponding local functional $\theta _n^{\rho }$; see \eqref{eq:localthetas}. This leads to the local optimization problem considered in Section~\ref{sec:local}. The optimal local contribution to $\frac{1}{8}\theta _N(f) +\tilde \theta (f)$ coming from functions of length $n$ at slope $\rho $ is denoted by $
\sigma _n^{\rho }$ and is shown in Section~\ref{sec:local} to approximate $\sigma ^{\rho }n$, for some constant $\sigma ^{\rho }$. Thus, this optimal local contribution will be linear in the local pattern's length, with a specific constant depending on its slope. 

In Section~\ref{sec:global}, we show that the global sum $\frac{1}{8} \theta_N (f) + \tilde{\theta} (f)$ can be approximated by an integral of local contributions. Heuristically, this should arise from concatenating the above local optimizers around a fine mesh of points $(z_i) \subset [-\sqrt{4N},\sqrt{4N}]$. Indeed, if these optimizers did not interact, then the global sum could be approximated as $\frac{1}{8} \theta_N (f) + \tilde{\theta} (f) \approx \sum_i \sigma_n^{\Omega_N' (z_i)} \approx n \sum_i \sigma^{\Omega_N' (z_i)} \approx \sqrt{N} \int_{-1}^1 \sigma^{\Omega'(x)} dx$, leading to the global constant $\mathfrak d=\int _{-1}^{1}\sigma ^{\Omega '(x)}dx$. 

However, these concatenated optimizers do exhibit long-range interactions, arising as terms in the double integral $\theta _n^{\rho }(f)$ from distant pairs of points $(x,y)$ near different $z_i$. To bound them directly, one must show that the optimal local function is very flat, namely, that it is very close (of distance $n^{1/2-c}$) to the line $y=\rho x$. For specific discrete log gases, this flatness (also called rigidity) can be shown using the loop equations, as studied in works by Borodin, Gorin, Guionnet, and Huang \cite{AODE,EDE}. Since the loop equations are not known (or predicted) to hold in our setup, we did not see a direct proof of this strong form of flatness. 

To circumvent this, we instead show that there exists a dyadic scale $m \ll n$ and a nearly optimal local function on an interval of length $m$, which satisfies certain averaged flatness properties (listed in Lemma \ref{lem:12}). The latter enable us to concatenate this ``seed'' near-optimizer $n/m$ times (Proposition~\ref{prop:1}) to form a near-optimizer on the original interval of length $n$; see Figure \ref{fig:4}. This near-optimizer will then be very flat and can therefore be concatenated (with negligible interaction) along $\Omega_N$, as outlined above. The idea of concatenating local near-optimizers on smaller scales to form very flat ones on larger scales was also used by Sandier--Serfaty \cite{LRE} in the context of continuum log gases. They produced their ``seed'' local near-minimizers by embedding the original (nonlocal) one-dimensional log gas into a two-dimensional Coulomb gas \cite{GAR}, whose interactions were more local. 

It seems less transparent that an embedding with such a locality property should exist in our discrete setting. So, we instead specify this ``seed'' through a different, more combinatorial route, by restricting the true local minimizer $f$ to a dyadic scale on which it is flat. The reason such a scale exists is that the double integral $\theta _n^{\rho }(f)$ will be linear in $n$. Thus, since there are $c \log n$ dyadic scales up to size $n$, there will be a scale of distances of $|x-y|$ that will contribute at most $Cn (\log n)^{-1}$ to the double integral (which will be the source of the error term $C(\log N)^{-1}$ in Theorem~\ref{thm:main}). At that scale, the function $f$ must be somewhat flat on average; we can use the restriction of $f$ to an appropriately chosen interval at this scale as our seed that will be concatenated to form very flat near-minimizers.

\subsection{Policy on constants}
We use $C,c$ to denote positive universal constants. These constants are regarded as generic in the sense that their value may change from one appearance to the next, with the value of $C$ increasing and the value of $c$ decreasing.

We write $A=O(B)$ if $|A|\le C|B|$ for some universal constant $C$.

\section{Local optimizers}\label{sec:local}

We start by setting up the local optimization problem we consider. For $n\ge 1$ let $\mathcal S_n^{\rm loc}$ be the set of continuous, piecewise linear functions $f:[0,n]\to \mathbb R$ with $f(0)=0$ and $f'(x)\in \{-1,1\}$ for any $x\notin \mathbb Z$.

\begin{figure}[ht]
\includegraphics[width=1\linewidth]{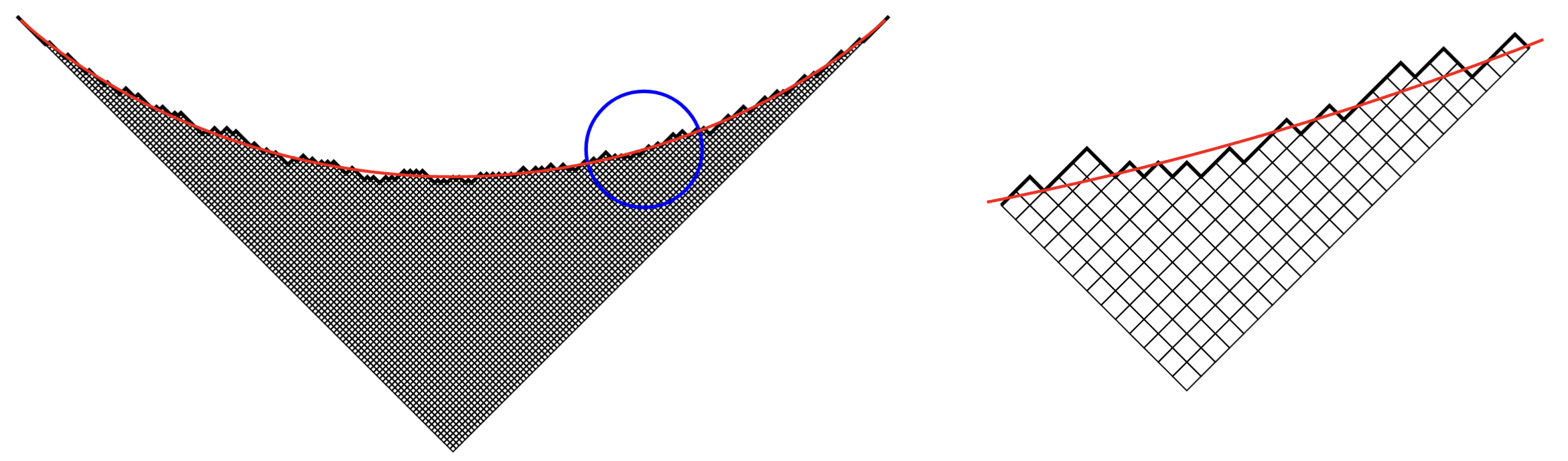}
\caption{An approximate minimizer of the global optimization problem can be constructed by concatenating approximate local minimizers along the curve $\Omega_N$. When zooming in locally around $z$, the function $\Omega_N$ can be replaced with a straight line with slope $\rho=\Omega_N'(z)$. This leads to the local optimization problem in Section~\ref{sec:local}.}
\label{fig:2}
\end{figure}

Let $\rho \in [-1,1]$ be a slope. For $n\ge 1$ and $f\in \mathcal S_n^{\rm loc}$ define 
\begin{equation}\label{eq:localthetas}
    \theta _n^{\rho }(f):= \int _0^n \int _0^n \Big( \frac{f(x)-f(y)}{x-y}-\rho  \Big)^2 dx\, dy, \quad \tilde \theta _n(f):=\sum _{(i,j)} \varphi (h_{i,j})
\end{equation}
where the sum is over cells $(i,j)$ for which the hook length $h_{i,j}$ is well defined. Namely, these are the cells (rotated squares of side length $\sqrt{2}$) that are trapped below $f(x)$ and above the function $\max (-x,x-n+f(n))$. These are precisely the cells that appear in the right pane in Figure~\ref{fig:2} (with $f$ being the upper boundary of the diagram and $(0,0)$ is the left most point of the diagram). The function $\varphi $ is defined as in \eqref{eq:tilde1}. Let $$\Theta _n^{\rho } (f):=\frac{1}{8}\theta _n^{\rho } (f)+\tilde \theta _n (f)$$ and let 
\begin{equation}
    \sigma _n^{\rho }:=\min _{f\in \mathcal S_n^{\rm loc}} \Theta_n^{\rho } (f).
\end{equation}

The main result of this section is given in the following theorem.

\begin{thm}\label{thm:local}
    For any $\rho \in [-1,1]$, there is a constant $\sigma ^{\rho }\ge 0$ such that 
    \begin{equation}\label{eq:99}
        \Big| \frac{\sigma _n^{\rho }}{n}-\sigma^{\rho } \Big| \le \frac{C}{\log n}.
    \end{equation}
    Moreover, the function $\rho \mapsto\sigma ^{\rho }$ is continuous on $[-1,1]$.
\end{thm}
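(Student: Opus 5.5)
The plan follows the outline in Section~\ref{Proof0}: an approximate sub/superadditivity argument for $\sigma_n^\rho$, where the sub-additive direction (building good long functions from short ones) is the hard one. Throughout, $\rho$ is fixed.

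\emph{Lower bound (approximate superadditivity).} For $f\in\mathcal S_n^{\rm loc}$ and $n=km$, cut $[0,n]$ into $k$ blocks $I_1,\dots,I_k$ of length $m$ and let $f_\ell$ be the restriction of $f$ to $I_\ell$, translated so that $f_\ell(0)=0$. The integrand of $\theta_n^\rho$ is nonnegative, so keeping only the diagonal squares $I_\ell\times I_\ell$ gives $\theta_n^\rho(f)\ge\sum_\ell\theta_m^\rho(f_\ell)$. Every cell counted in $\tilde\theta_m(f_\ell)$ is trapped under the block and is also a cell for $f$ with the same hook length. Since $\varphi\ge0$, this gives $\tilde\theta_n(f)\ge\sum_\ell\tilde\theta_m(f_\ell)$. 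Hence $\sigma_n^\rho\ge k\,\sigma_m^\rho$. Together with the trivial bound $\sigma_n^\rho\le Cn$ (from a staircase rounding of the line of slope $\rho$), this shows that $\sigma^\rho:=\lim\sigma_n^\rho/n$ along dyadic $n$ exists, that $\sigma_n^\rho/n\le\sigma^\rho+o(1)$, and that $\sigma^\rho\ge0$.

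\emph{Upper bound (the main obstacle).} Concatenating optimizers creates off-diagonal interaction terms
\[
\int_{I_\ell}\int_{I_{\ell'}}\Big(\frac{f(x)-f(y)}{x-y}-\rho\Big)^2dx\,dy,
\]
together with new hooks that cross block boundaries. Both must be shown to be $o(n)$. The new hooks have length at least their distance to the boundary, and $\varphi(h)\le Ch^{-2}$, so hooks near block boundaries contribute $O(k\log m)$ if the blocks have controlled height. The interaction term is the serious issue. The plan is as follows.

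(i) Take a true minimizer $g$ on length $n$. Since $\theta_n^\rho(g)\le Cn$, decompose the double integral by the dyadic scale of $|x-y|$. Among the $\asymp\log n$ scales, pigeonhole a scale $m$ whose contribution is at most $Cn/\log n$.

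(ii) Average over the windows of length $m$ (and translates) to select a window $J$ on which $g|_J$ satisfies three things. First, $\Theta_m^\rho(g|_J)\le m(\sigma_n^\rho/n)+Cm/\log n$; this uses superadditivity in reverse, by averaging. Second, the averaged deviation $|g(x)-g(y)-\rho(x-y)|$ at scale $m$ is at most $m\cdot(\log n)^{-1/2}$. Third, the endpoint discrepancy $g(m)-\rho m$ is small, after a small modification of the endpoints to fix the slope.

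(iii) Concatenate $n'/m$ copies of this seed, whose net slope is $\rho$, so the concatenation stays within $O(m)$ of the line. The interaction between blocks at distance $d\ge 2m$ is then bounded by $\int (m/d)^2$-type terms, and summing over pairs of blocks gives $O(n'/m\cdot m)$ weighted by the flatness smallness. Adjacent blocks are controlled by the scale-$m$ estimate from step (ii). This yields $\sigma_{n'}^\rho\le n'(\sigma_n^\rho/n+C/\log n)$ for all $n'\ge n$ divisible by $m$. General $n'$ is handled by padding with $O(m)$ extra steps at cost $O(m)$.

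Combining the upper bound with the lower bound gives \eqref{eq:99}. I expect step (iii), bounding the long-range interaction using only averaged rather than pointwise flatness, to be the crux. The first thing to try is writing $f-\rho x$ as a periodic function, so that the cross terms become a convolution against the kernel $(x-y)^{-2}$ that can be summed exactly.

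\emph{Continuity of $\rho\mapsto\sigma^\rho$.} Expand
\[
\theta_n^{\rho'}(f)=\theta_n^\rho(f)+2(\rho-\rho')\iint\Big(\frac{f(x)-f(y)}{x-y}-\rho\Big)+(\rho-\rho')^2n^2.
\]
The $n^2$ term is fatal at scale $n$, so this comparison should be made only at a fixed mesoscopic scale. Take a near-optimizer $f$ for slope $\rho$ on length $m$, and tilt it by inserting $\approx|\rho'-\rho|m/2$ extra up- or down-steps spread evenly, changing $\Theta$ by $o(m)$ as $\rho'\to\rho$ when $m$ is fixed. Then use the concatenation bound above to pass to large $n$. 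This gives upper semicontinuity, and the same argument with $\rho,\rho'$ swapped gives continuity. At $\rho=\pm1$, where tilting is impossible, the function is forced to be the line, so $\sigma^{\pm1}=0$. Near $\pm1$, continuity follows from the staircase bound $\sigma^\rho\le C(1-|\rho|)\log\frac{1}{1-|\rho|}$.
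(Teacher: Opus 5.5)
\textbf{There is a genuine gap.} Your outline matches the paper's: exact superadditivity plus Fekete, a dyadic pigeonhole on the scale decomposition of $\theta_n^\rho$, a seed window cut from a true minimizer, then concatenation. But step (ii) lacks the idea that makes it work. You need one window $J$ that is \emph{simultaneously} near-optimal in cost and flat. The flatness functionals are nonnegative with window-average $O(m/\log n)$, so Markov excludes only a small fraction of windows for each. However, $\Theta_m^\rho(g|_J)-m\sigma_n^\rho/n$ has no sign. Averaging (your ``superadditivity in reverse'') only produces \emph{some} window at or below average, and that window may sit in a sparse set whose members all fail the flatness tests. To know that most windows have near-average cost, you need a lower bound valid for every window, $\Theta_m^\rho(g|_J)/m\ge\sigma_m^\rho/m\ge\sigma_n^\rho/n-O(1/\log m)$, and that is precisely the rate being proved.

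The paper breaks this circularity by strong induction on $n$. It assumes $\sigma_m^\rho/m\ge\sigma^\rho-A/\log m$ for all $m<n$ and all $\rho$, and uses the exact inequality $\sigma_n^\rho/n\le\sigma^\rho$. (State this exactly, from superadditivity for unequal blocks and Fekete, not with an $o(1)$.) Then the windows whose cost exceeds $m\sigma_n^\rho/n+A^{2/3}m/\log m$ have density at most $1-\frac14A^{-1/3}$, while the flatness-bad windows have density $O(A^{-2/3})$. The output $\sigma^\rho\le\sigma_n^\rho/n+CA^{2/3}/\log n$ closes the induction for $A$ large. Choosing the window that minimizes cost plus a large fixed multiple of the flatness functionals should also work, since after AM--GM each construction error appears to be linear in them up to negligible terms; but some such device is needed.

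\textbf{Further problems.} Several quantitative steps also fail as stated.

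A single scale with contribution $\le Cn/\log n$ is not enough. The interaction across a junction of adjacent blocks is bounded by $\int_0^m\bar g(y)^2y^{-1}\,dy+\int_0^m(\bar g(m)-\bar g(x))^2(m-x)^{-1}\,dx$, with $\bar g(x)=g(x)-\rho x$. This sums squared deviations over \emph{all} dyadic scales below $m$. If most of $\theta_n^\rho$ sat at scale $m/2$, it would be of order $m$ per junction, an $O(1)$ error per unit length. The paper instead picks $k$ with $\sum_{\ell\le k}2^{\ell/3}I_\ell(f)\le C2^{k/3}n/\log n$, which controls all smaller scales geometrically.

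The real work is in the endpoint correction, not the long-range interaction. Far blocks are handled directly by $|\bar f(im)|\le2$ and $\int_0^m\bar g^2\le m\int_0^m\bar g^2/x$.
\begin{itemize}
\item The typical discrepancy at scale $m$ is $\sqrt{m/\log n}$. With your bound $m(\log n)^{-1/2}$, the required $s\asymp m/\sqrt{\log n}$ flips cost of order $s^2\gg m$.
\item Even with $s\lesssim\sqrt{m/\log n}$, the flips must be spread through the middle of the window. Lemma~\ref{lem:flat}, after reducing to $1-|\rho|\ge n^{-0.01}$, guarantees enough up-steps there.
\item The cross term with $\bar g$ must be split by Cauchy--Schwarz with weights $|x-y|^{-7/3}$ and $|x-y|^{-5/3}$, using a $|x-y|^{-5/3}$-energy bound on the window. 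The naive split gives only $m/\sqrt{\log n}$, i.e.\ a $1/\sqrt{\log n}$ rate.
\item $\rho m$ is generally not an admissible endpoint, so copies of a single seed drift off the line. The paper alternates seeds ending at $a\in[\rho m-2,\rho m]$ and $a\in[\rho m,\rho m+2]$ to keep $|\bar f(im)|\le 2$.
\end{itemize}

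\textbf{Continuity.} No tilting is needed. The constant in \eqref{eq:99} is uniform in $\rho$, and each $\sigma_n^\rho/n$ is a minimum of finitely many polynomials in $\rho$. Uniform limits of continuous functions are continuous.
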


To prove a quantitative bound on the convergence $\frac{\sigma _n^{\rho }}{n}\to \sigma ^{\rho }$ we prove that $\sigma _n^{\rho }$ is superadditive and approximately subadditive.

\begin{lem}\label{lem:super}
    For any $\rho \in [-1,1]$ and $n,m\ge 1$ we have that $\sigma _{n+m}^{\rho}  \ge \sigma _n^{\rho }+\sigma_m^{\rho }$.
\end{lem}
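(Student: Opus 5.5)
Take a minimizer $f\in\mathcal S^{\rm loc}_{n+m}$ with $\Theta^\rho_{n+m}(f)=\sigma^\rho_{n+m}$. The minimum exists because $\mathcal S^{\rm loc}_{n+m}$ is finite. Split $f$ into two pieces. Let $f_1:=f|_{[0,n]}$, and let $f_2(x):=f(n+x)-f(n)$ for $x\in[0,m]$. Then $f_1\in\mathcal S_n^{\rm loc}$ and $f_2\in\mathcal S_m^{\rm loc}$, because $f(n)$ is an integer shift and the slopes are unchanged. The plan is to prove
\begin{equation*}
\Theta^\rho_{n+m}(f)\ge \Theta^\rho_n(f_1)+\Theta^\rho_m(f_2)\ge\sigma^\rho_n+\sigma^\rho_m,
\end{equation*}
which gives the lemma. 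It suffices to prove the first inequality separately for $\theta$ and for $\tilde\theta$.

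\textbf{The $\theta$ term.} The integrand $\big(\frac{f(x)-f(y)}{x-y}-\rho\big)^2$ is nonnegative. Split the domain $[0,n+m]^2$ into the four rectangles $[0,n]^2$, $[n,n+m]^2$, and the two off-diagonal blocks. The integral over $[0,n]^2$ is exactly $\theta^\rho_n(f_1)$. On $[n,n+m]^2$, substitute $x=n+x'$, $y=n+y'$. Difference quotients are invariant under translation and under adding a constant, so this integral equals $\theta^\rho_m(f_2)$. The off-diagonal blocks contribute a nonnegative amount. Hence $\theta^\rho_{n+m}(f)\ge\theta^\rho_n(f_1)+\theta^\rho_m(f_2)$.

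\textbf{The $\tilde\theta$ term.} Since $\varphi\ge0$, it is enough to inject the cells of $f_1$ and of $f_2$ into the cells of $f$ so that hook lengths are preserved.

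The cells of $f_1$ lie below $f$ on $[0,n]$ and above $\max(-x,x-n+f(n))$. The lower boundary for $f$ is $\max(-x,\,x-(n+m)+f(n+m))$. Because $f$ is $1$-Lipschitz, $f(n+m)\le f(n)+m$, so $x-(n+m)+f(n+m)\le x-n+f(n)$. The region for $f$ therefore contains the region for $f_1$.

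The hook length of a cell is determined by the two boundary points hit along the two diagonal directions from the cell. For a cell in the region of $f_1$, both of these points lie on the graph of $f$ over $[0,n]$. This is the step to check carefully: the region for $f_1$ is a rectangle with corners $(0,0)$, $(n,f(n))$, and the two intersection points. Its top vertices lie on the graph over $[0,n]$, so the arms of any cell inside it terminate within that range. Consequently the hook is the same whether it is computed in $f_1$ or in $f$.

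The cells of $f_2$ are handled symmetrically after translating by $(n,f(n))$. Their region is bounded below by $\max(-(x-n)+f(n),\,x-(n+m)+f(n+m))$. The first line lies above $-x$ because $f(n)\ge -n$, again by the Lipschitz property, so this region is also contained in the region for $f$. Hooks are preserved by the same argument.

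The two cell families are disjoint. The cells of $f_1$ are cut off above the line through $(n,f(n))$ of slope $+1$, and the cells of $f_2$ lie above the line through $(n,f(n))$ of slope $-1$. These wedges meet only at the point $(n,f(n))$ itself. This disjointness, together with hook preservation, is the main thing to verify, and it is purely geometric.

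Summing the two inequalities gives $\Theta^\rho_{n+m}(f)\ge\Theta^\rho_n(f_1)+\Theta^\rho_m(f_2)\ge\sigma^\rho_n+\sigma^\rho_m$, as required.
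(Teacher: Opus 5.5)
Your proof is correct and takes the same approach as the paper: restrict the minimizer to $[0,n]$, shift the remainder by $f(n)$, and note that $\theta^\rho_{n+m}$ and $\tilde\theta_{n+m}$ integrate or sum nonnegative terms over larger domains with hook lengths unchanged — a geometric check the paper leaves implicit and you carry out. One small wording fix: the two half-planes you name (above the slope $+1$ line and above the slope $-1$ line through $(n,f(n))$) meet in a whole upward wedge, not a point, so state disjointness instead via the left and right wedges $\{|y-f(n)|\le n-x\}$ and $\{|y-f(n)|\le x-n\}$, or simply note that the two cell regions lie in $\{x\le n\}$ and $\{x\ge n\}$ respectively.
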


\begin{proof}
    Let $f\in \mathcal S^{\rm loc}_{n+m}$ be the function achieving the minimum in the definition of $\sigma _{n+m}^{\rho }$. Let $g\in \mathcal S^{\rm loc}_n$ defined by $g(x)=f(x)$ for any $x\in [0,n]$ and $h\in \mathcal S^{\rm loc}_m$ defined by $h(x)=f(n+x)-f(n)$. We clearly have that $\theta _{n+m}^{\rho }(f)\ge \theta _n^{\rho }(g)+\theta _m^{\rho }(h) $ and $\tilde \theta _{n+m}(f)\ge \tilde \theta _{n}(g)+\tilde \theta _m(h)$ since we just integrate and sum over bigger domains.
\end{proof}

\begin{lem}\label{lem:11}
    We have that $cn(1-|\rho|)^2 \le \sigma _n^{\rho }\le Cn(1-|\rho |)\log \big( \frac{2}{1-|\rho|} \big)$.
\end{lem}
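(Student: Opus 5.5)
The lower bound will come from the $\theta$ term alone, and the upper bound from an explicit periodic staircase. First, a symmetry reduction. For $f\in\mathcal S_n^{\rm loc}$, put $g(x)=f(n-x)-f(n)$. Then $g\in\mathcal S_n^{\rm loc}$, and its difference quotients are those of $f$ up to sign, so $\theta_n^{-\rho}(g)=\theta_n^{\rho}(f)$. Geometrically $g$ corresponds to transposing the local diagram, which preserves the multiset of hook lengths, so $\tilde\theta_n(g)=\tilde\theta_n(f)$. Hence $\sigma_n^{\rho}=\sigma_n^{-\rho}$, and we may assume $\rho\ge 0$. Write $\varepsilon=1-\rho$. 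The case $\rho=1$ is trivial: $f(x)=x$ gives $\theta_n^1(f)=0$ and there are no cells, so $\sigma_n^1=0$. Below we assume $\varepsilon>0$.

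\emph{Lower bound.} Since $\tilde\theta_n\ge 0$, it suffices to bound $\theta_n^\rho(f)$ from below for every $f\in\mathcal S_n^{\rm loc}$. On each unit interval $[k,k+1]\subset[0,n]$ the function $f$ is linear with slope $\pm1$. So for $x,y$ in the same unit interval the quotient $\frac{f(x)-f(y)}{x-y}$ equals $\pm1$, and the integrand is at least $(1-|\rho|)^2$. Restricting the double integral to the diagonal squares $[k,k+1]^2$ gives $\theta_n^\rho(f)\ge \lfloor n\rfloor(1-|\rho|)^2\ge \tfrac n2(1-|\rho|)^2$. Therefore $\sigma_n^\rho\ge \tfrac1{16}n(1-|\rho|)^2$.

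\emph{Upper bound.} Let $a=\lceil 2/\varepsilon\rceil$. Take $f$ periodic with period $a+1$: $a$ up-steps followed by one down-step. Its average slope is $\frac{a-1}{a+1}=1-\frac{2}{a+1}$, which is within $O(\varepsilon)$ of $\rho$. If the mismatch matters, we instead use a Sturmian or rounded choice of block lengths ($a$ or $a+1$) so that $g:=f-\rho x$ stays bounded, $|g|\le C$.

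We then split $\theta_n^\rho(f)=\iint\big(\frac{g(x)-g(y)}{x-y}\big)^2$ by the size of $t=|x-y|$.
\begin{itemize}
\item For $t\ge a$, use $|g(x)-g(y)|\le C$. The contribution is at most $Cn\int_a^\infty t^{-2}\,dt\le Cn\varepsilon$.
\item For $t\le a$, note that $g'=\varepsilon$ on up-steps. If no down-step lies between $x$ and $y$, the quotient is at most $\varepsilon$, contributing $O(na\varepsilon^2)=O(n\varepsilon)$.
\item If a down-step lies between $x$ and $y$, the quotient is at most $C/\max(t,1)$. For fixed $x$ at distance $s$ from the nearest down-step, integrating over $y$ gives $O(\min(1,1/s))$. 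Summing over $s\le a$ within one period gives $O(\log a)$. With about $n/a$ periods, the total is $O(n\varepsilon\log(2/\varepsilon))$.
\end{itemize}

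For the hook term, use $\varphi(h)\le C h^{-2}$ for $h\ge 1$. A cell with hook length $h$ corresponds to a pair of boundary unit segments, one of each orientation, at distance exactly $h$. The rarer orientation here is the down-step, of which there are $O(n/a)$. So for each $h$ there are at most $O(n\varepsilon)$ cells with hook $h$, and $\tilde\theta_n(f)\le Cn\varepsilon\sum_h h^{-2}\le Cn\varepsilon$. Combining the pieces gives $\sigma_n^\rho\le Cn(1-|\rho|)\log\frac{2}{1-|\rho|}$.

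The main obstacle is the hook-counting claim: that in this local trapped region, each cell corresponds to such a pair with the down-step always being the endpoint of the rarer type, and that boundary effects near $0$ and $n$ do not spoil it. If this fails, I would bound the cells directly from the staircase geometry. Each down-step creates a corner, and the cells whose hooks reach that corner have hook lengths $1,2,\dots$ along a single diagonal, each value once per corner, which again yields the bound $Cn\varepsilon$.
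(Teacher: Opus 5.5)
Your proof is correct and follows the paper's argument. The lower bound comes from the same unit squares $[k,k+1]^2$. The upper bound uses a staircase within $O(1)$ of $\rho x$, with runs of about $2/(1-\rho)$ up-steps separated by single down-steps, essentially the same splitting of $\theta_n^\rho$, and $\varphi(h)\le Ch^{-2}$. The hook count you flagged as the main obstacle is valid: the trapped cells correspond exactly to pairs (an up-step at $u$, a down-step at $d$) with $0\le u<d<n$ and hook length $d-u$, so each $h$ occurs at most once per down-step, which is slightly sharper than the paper's count.

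You must commit to the rounded staircase. The purely periodic one has mean slope off from $\rho$ by order $(1-\rho)^2$, so for large $n$ it adds order $(1-\rho)^4n^2$ to $\theta_n^\rho$. Blocks of length $a$ and $a+1$ with $a=\lceil 2/(1-\rho)\rceil$ both have mean slope at least $\rho$, so they cannot cancel this drift; use blocks of total length $\lfloor 2/(1-\rho)\rfloor$ and $\lceil 2/(1-\rho)\rceil$ instead. Keep your convention of starting with a run of up-steps, which gives $f(x)=x$ when $n(1-\rho)\lesssim 1$. This matters because any down-step costs $(1+\rho)^2$ in $\theta_n^\rho$, which is also why the paper's maximal function below $\rho x$, which begins with a down-step, does not literally cover that small regime.
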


\begin{proof}
    For the lower bound observe that when $x,y\in [i,i+1]$ for some $i\le n$ we have $\frac{f(x)-f(y)}{x-y}=\pm 1 $ and therefore $\big( \frac{f(y)-f(x)}{y-x}-\rho  \big)^2\ge (1-|\rho|)^2$. The contribution to the integral from such pairs $(x,y)$ will be of order $(1-|\rho|)^2n$.
    
    We turn to prove the upper bound. By symmetry, we may assume that $\rho \ge 0$. We let $f$ be the maximal function in $\mathcal S^{\rm loc}_n$ for which $f(x)\le \rho x$ for all $x\in [0,n]$ (the maximum of functions in $\mathcal S^{\rm loc}_n$ is in $\mathcal S^{\rm loc} _n$ so this is well defined). Note that $f(x)\in [\rho x-2,\rho x]$ for all $x\in [0,n]$. This is a function that takes one down step every $\approx 2/ (1-\rho )$ steps. Let $A:=\{(x,y)\in [0,n]^2: f(y)=f(x)+(y-x)\}$ be the set of pairs $x,y$ that are in the same continuous interval of up steps. There are at most $C(1-\rho )n$ such intervals, each of length at most $C/(1-\rho )$. Moreover, for $(x,y)\in A$ we have that $(\frac{f(x)-f(y)}{x-y}-\rho )^2=(1-\rho )^2$ and therefore the contribution to $\theta _n^{\rho }$ coming from $(x,y)\in A$ is bounded by $C(1-\rho )n (\frac{1}{1-\rho })^2 (1-\rho )^2=C(1-\rho )n$. Next, we have that $|f(x)-\rho x|\le 2$ and therefore for $(x,y)\notin A$ we have    $(\frac{f(x)-f(y)}{x-y}-\rho )^2\le \frac{C}{(|x-y|+1)^2}$. Thus, the contribution from such pairs is bounded by 
    \begin{equation}
        \int _{[0,n]^2\setminus A}\frac{C}{(|x-y|+1)^2} dx\, dy \le \int _0^n \frac{C}{d_x+1} dx \le Cn(1-\rho )\log \Big( \frac{2}{1-\rho} \Big),
    \end{equation}
    where $d_x:=\inf \{|x-y|: f'(y)=-1\}$ is the distance of $x$ from the closest down step. We obtain that $\theta _n^{\rho }(f)\le  Cn(1-\rho )\log \big( \frac{2}{1-\rho} \big)$.

    Next, we turn to bound $\tilde \theta  _n(f)$. First, note that for any integer $x$
    \begin{equation}
        \varphi (x)\le x^{-2} \sum _{k=1}^{\infty } \frac{1}{k(k+1)(2k+1)}  \le Cx^{-2}.
    \end{equation}
    Moreover, for any integer $x\le \frac {1}{1-\rho }$ the number of cells $(i,j)$ for which $h_{i,j}=x$ is bounded by $Cn(1-\rho )$. When $x\ge 1/(1-\rho )$ we can simply bound this number by $Cn$. We obtain that 
    \begin{equation}
        \tilde \theta _n(f) \le \sum _{x\le 1/(1-\rho )} Cn(1-\rho )x^{-2} + \sum _{x\ge 1/(1-\rho )}Cnx^{-2} \le C(1-\rho )n.
    \end{equation}
    This finishes the proof of the upper bound.
\end{proof}

From the last two lemmas and Fekete's lemma it follows that $\sigma _n^{\rho }/n$ converges and we let $\sigma ^{\rho }:=\lim _{n\to \infty } \sigma _n^{\rho }/n$. It also follows that
\begin{equation}\label{eq:23}
    \frac{\sigma _n^{\rho }}{n}\le \sigma ^{\rho }
\end{equation}
since by Lemma~\ref{lem:super} we have that $\sigma _n^{\rho }/n\le \sigma _{kn}^{\rho }/{kn} \to \sigma ^{\rho }$ as $k\to \infty $. Hence, in order to prove Theorem~\ref{thm:local} it suffices to prove that $\sigma _n^{\rho }/n \ge \sigma ^{\rho }-C/\log n$. The proof of this bound is concluded in Proposition~\ref{prop:1} below.

\subsection{Fluctuations}

\begin{lem}\label{lem:flat}
    Let $\rho \in [-1,1]$ and let $f\in \mathcal S^{\rm loc}_n$ such that $\theta _n^{\rho } (f)\le Bn$ for some $B> 0$. Then, there is $C_B>0$ depending only on $B$ such that $|f(x)-\rho x|\le C_Bn^{2/3} $ for any $x\in [0,n]$.
\end{lem}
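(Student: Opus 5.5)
Set $g(x):=f(x)-\rho x$ on $[0,n]$, so that $g(0)=0$ and $g$ is $2$-Lipschitz, since $|f'|=1$ and $|\rho|\le 1$. The integrand of $\theta_n^{\rho}$ is exactly $\big(\frac{g(x)-g(y)}{x-y}\big)^2$, so the hypothesis reads
\begin{equation*}
\int_0^n\int_0^n \Big(\frac{g(x)-g(y)}{x-y}\Big)^2dx\,dy\le Bn .
\end{equation*}
We argue by contradiction: suppose $|g(x_0)|= M$ at some $x_0\in[0,n]$, where $M=Kn^{2/3}$ and $K$ is large depending on $B$. The plan is to show that such a large excursion forces the double integral to be at least $cM^3/n$. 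Comparing with $Bn$ then gives $M^3\le CBn^2$, that is $M\le C_Bn^{2/3}$.

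First we use the Lipschitz bound. For $|x-x_0|\le M/4$ we have $|g(x)|\ge M/2$, so $g$ stays large (with a fixed sign, say positive) on an interval $I$ of length at least $\min(M/4,\,\cdot\,)$ inside $[0,n]$. Since $M\le 2n$, this length is at least $cM$, and $I$ can be taken on one side of $x_0$ inside $[0,n]$. Similarly, because $g(0)=0$, we have $|g(y)|\le M/4$ for $y\in J:=[0,M/8]$. The interval $J$ is disjoint from $I$: both $x_0$ and every point of $I$ are at distance at least $M/4$ from $0$, since $|g|\ge M/2$ there and $g$ is $2$-Lipschitz with $g(0)=0$.

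Next, for $x\in I$ and $y\in J$ we have $|g(x)-g(y)|\ge M/4$ and $|x-y|\le n$. The contribution of $I\times J$ to the double integral is therefore at least
\begin{equation*}
|I|\,|J|\,\frac{(M/4)^2}{n^2}\ge c\,\frac{M^4}{n^2}.
\end{equation*}
This already gives $M^4\le CBn^3$, i.e.\ $M\le C_Bn^{3/4}$, which is weaker than needed. To reach the exponent $2/3$, refine the choice of $J$. Instead of points near $0$, compare $I$ with the set of points where $|g|\le M/4$ that are nearest to $I$. Let $y_0$ be the point closest to $x_0$ with $|g(y_0)|\le M/4$ (take $y_0=0$ if necessary), and put $D=|x_0-y_0|\ge M/8$. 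On the whole segment between $y_0$ and $x_0$ we have $|g|\ge M/4$. Take $J$ to be the neighbourhood of $y_0$ of length $cM$ on which $|g|\le M/2$, and the part of the segment near $x_0$ on which $|g|\ge 3M/4$. The same computation now bounds the contribution below by $cM^2\cdot M^2/D^2$, which is useful only when $D$ is small.

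The main obstacle is the case of large $D$: a long plateau of height of order $M$ over a length $D\lesssim n$. Here the cost should be estimated differently. Split the plateau into blocks of length $M$ and pair each block with a point near $y_0$, or use a Hardy/fractional-Sobolev type inequality. Our first attempt is the standard bound that the $H^{1/2}$ seminorm of a function which differs by $M$ on two sets of size $L$ at distance comparable to $L$ is at least of order $M^2$. Summed dyadically over scales this gives at least $cM^2\log(D/M)$. This still does not obviously produce $M^3/n$. So instead we would combine it with an $L^2$ control: a plateau of height $M/4$ and length $D$, together with $g(0)=0$ and the boundary behaviour, gives a contribution of order $M^2D/n$ from pairs at distance of order $n$, via $\iint_{|x-y|\le n}(g(x)-g(y))^2/n^2$. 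Balancing the two regimes, $D\le M$ gives order $M^2$ and $D\ge M$ gives order $M^2D/n\ge M^3/n$, so in every case the integral is at least $cM^3/n$. This is the $2/3$ bound. The delicate step is making this last pairing estimate rigorous, and we expect it to be the main difficulty.
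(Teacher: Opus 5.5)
\textbf{There is a genuine gap, in the large-$D$ regime.} Your setup is right. With $g=f-\rho x$ ($g(0)=0$, $2$-Lipschitz), the goal is to show that an excursion $g(x_0)=M$ forces the double integral to be at least $cM^3/n$. Your two Lipschitz neighbourhoods are also the right ingredients: $g\ge 3M/4$ on $[x_0-M/8,x_0]\subset[0,n]$ (note $x_0\ge M/2$), and $g\le M/4$ on $[0,M/8]$.

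The problem is the claimed contribution of order $M^2D/n$ ``from pairs at distance of order $n$''. To get it, you would need to pair the plateau with a set of measure $\gtrsim n$ on which $|g|$ is small, and nothing in your argument produces such a set. The only low-value set you actually control is a neighbourhood of $0$ of length $\asymp M$. Pairing the plateau with it gives, in general, only $cM^3D/n^2$. For $M\le D\ll n$ this is far below $M^3/n$; at $D\approx M$ it is the same $M^4/n^2$ that gave you $n^{3/4}$. So the ``balancing'' conclusion is not established. The distance $D$ to the nearest low point is simply not the quantity that controls the estimate, and the dyadic $cM^2\log(D/M)$ bound is unnecessary.

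\textbf{The missing idea is a measure dichotomy with a single threshold.} Assume $g(x_0)=M>0$ and let $E=\{x\in[0,n]: g(x)\le M/2\}$.
\begin{itemize}
\item If $|E|\ge n/2$, pair $E$ with $[x_0-M/8,x_0]$, where $g\ge 3M/4$. The integrand is at least $(M/4)^2/n^2$ on a set of measure at least $(n/2)(M/8)$, so the contribution is at least $cM^3/n$.
\item If $|E|<n/2$, then $E^c=\{g>M/2\}$ has measure greater than $n/2$. Pair it with $[0,M/8]$, where $g\le M/4$, to get the same bound.
\end{itemize}
Either way $cM^3/n\le Bn$, hence $M\le C_Bn^{2/3}$.

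This is essentially the paper's two-case proof. The point is that each Lipschitz neighbourhood (of length $\asymp M$) should be paired with a set of measure $\asymp n$, not with the other neighbourhood. One of $E$, $E^c$ always supplies such a set, lying on the opposite side of the threshold $M/2$ from the chosen neighbourhood with margin $M/4$.
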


Let us note that using a simple inductive argument (or the fractional Sobolev inequality) one can replace the exponent $2/3$ in the lemma by $1/2+\epsilon$, for any $\epsilon>0$. However, this will not be needed for our proof.

\begin{proof}
    Suppose otherwise and without loss of generality, assume that  $f(x_0)\ge \rho x_0 + Mn^{2/3} $ for some $x_0\in [0,n]$ and a very large constant $M$ (to be determined depending on $B$ later). Let $I:=\{x\in [0,n]: f(x)\le \rho x +\frac{1}{2}Mn^{2/3} \}$. We consider separately two cases:

    Case 1: We have that $|I|\ge n/2$. In this case we let $J:=\{y\in [0,n]: |y-x_0|\le \frac{M}{8}n^{2/3}\}$. Since $|f'(y)|\le 1$ for all $y$, for any $y\in J$ 
    \begin{equation}
        f(y)\ge f(x_0)-\frac{M}{8}n^{2/3} \ge \rho x_0 +\frac{7}{8} Mn^{2/3}\ge   \rho y +\frac{3}{4}Mn^{2/3}
    \end{equation}
    Hence, for any $x\in I$ and $y\in J$ we have that 
    \begin{equation}
    \Big( \frac{f(y)-f(x)}{y-x}-\rho \Big)^2 \ge \Big( \frac{cMn^{2/3}}{y-x} \Big) ^2 \ge cM^2n^{-2/3}.
    \end{equation}
    Thus, the contribution to the integral in $\theta _n^{\rho }(f)$ from $x\in I$ and $y\in J$ will be at least $cM^2n^{-2/3}|I||J|\ge cMn$ which contradicts our assumption on $\theta _n^{\rho }(f)$ when $M$ is sufficiently large. 

    Case 2: We have that $|I|\le n/2$. In this case, we consider the contribution to the integral of $\theta _n^{\rho }(f)$ coming from $x\in I^c$ and $y\in [0,\frac{M}{8}n^{2/3}]$. For such $y$ we have that $f(y)\le \rho y +\frac{M}{4}n^{2/3}$ so as in Case 1 this contribution will be at least $cMn$.
\end{proof}

\subsection{Construction of a flat almost minimizer}

In this subsection and the next we use an inductive argument in order to finish the proof of Theorem~\ref{thm:local}. To this end, let $A$ be a sufficiently large constant to be determined later. The constants $C,c$ are not allowed to depend on $A$, and $A$ will be chosen sufficiently large depending on these constants. Observe that when $A$ is  sufficiently large, for all $m\le e^{\sqrt{A}}$ and $\rho \in [-1,1]$ we have that 
\begin{equation}
    \frac{\sigma _m^{\rho }}{m} \ge \sigma^{\rho } -\frac{A}{\log m}
\end{equation}
since the right hand side will be negative in this case.

Next, we let $n\ge e^{\sqrt{A}}$ and assume by induction that for any  $\rho\in [-1,1]$ we have 
\begin{equation}\label{eq:induction}
    \frac{\sigma _m^{\rho }}{m} \ge \sigma^{\rho } -\frac{A}{\log m},\quad \text{for all $m<n$}.
\end{equation}

To complete the induction we need to prove that the same inequality holds when $m=n$, which is done in Proposition~\ref{prop:1} below. First, observe that when $1-|\rho|\le n^{-0.01}$ we have by Lemma~\ref{lem:11} that $\sigma ^{\rho }\le Cn^{-0.01}\log n$ and therefore \eqref{eq:induction} with $m=n$ holds trivially in this case (the right hand side will be negative). Hence, from now on we fix a slope $\rho$ with $1-|\rho |\ge n^{-0.01}$.

In the next key lemma, we construct an almost minimizer $g$ that is somewhat flat. Then, in the proof of Proposition~\ref{prop:1} below, we will concatenate this almost minimizer to itself to create a function on a larger domain having a small $\Theta ^{\rho }$.

\begin{lem}\label{lem:12}
    There exists $m\in [n^{1/3},n^{2/3}]$ such that for any $a\in \mathbb Z$ with $a\equiv m (\!\!\!\mod 2)$ and $|a-\rho m|\le 10$, there is a function $g\in \mathcal S^{\rm loc}_m$ with $g(m)=a$ such that
    \begin{enumerate}
        \item We have 
        \begin{equation}\label{eq:theta62}
            \frac{\Theta _m^{\rho }(g)}{m} \le \frac{\sigma _n ^{\rho }}{n} + \frac{CA^{2/3}}{\log n} .
        \end{equation}
        \item Letting $\bar g(x)=g(x)-\rho x$ we have
        \begin{equation}\label{eq:integral}
            \int _0^{m}  \frac{\bar g(x)^2}{x} dx \le \frac{CA^{2/3}m}{\log n} \quad \text{and} \quad  \int _0^{m}  \frac{(\bar g (m)-\bar g(x))^2}{m-x} dx \le \frac{CA^{2/3}m}{\log n}  
        \end{equation}
    \end{enumerate}
\end{lem}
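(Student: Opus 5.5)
Let $f\in\mathcal S^{\rm loc}_n$ be a minimizer, so $\Theta_n^\rho(f)=\sigma_n^\rho\le n\sigma^\rho\le Cn$ by \eqref{eq:23} and Lemma~\ref{lem:11}, and write $\bar f(x)=f(x)-\rho x$. By Lemma~\ref{lem:flat}, $|\bar f|\le Cn^{2/3}$. The plan is to pick a dyadic scale $m$ where the long-range part of $\theta_n^\rho(f)$ is small, then restrict $f$ to a well-chosen window $[b,b+m]$, shift it to start at $0$, and modify it near the right end so that it ends at the prescribed value $a$.

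\textbf{Choice of scale.} Partition the pairs $(x,y)\in[0,n]^2$ according to the dyadic shell $2^k\le|x-y|<2^{k+1}$, with $2^k\in[n^{1/3},n^{2/3}]$. There are $c\log n$ such shells and their contributions sum to at most $\theta_n^\rho(f)\le Cn$. Hence some $m=2^k$ has shell contribution at most $Cn/\log n$, and this is where the $1/\log n$ in the statement comes from. The factor $A^{2/3}$ suggests a balance of this against the induction hypothesis \eqref{eq:induction} (losses of size $A/\log m\approx 3A/\log n$). So I would use shells spanning a range of $A^{1/3}$-dependent width, or trade the window-selection error against the induction gain; I will tune this at the end.

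\textbf{Window selection by averaging.} Average over $b\in[0,n-m]$, or over the $n/m$ disjoint windows $[jm,(j+1)m]$, together with a random shift. Three quantities should each be at most their mean times a constant for some window:
\begin{enumerate}[label=(\roman*)]
\item $\Theta_m^\rho(f|_{[b,b+m]})/m$. Its average is at most $\sigma_n^\rho/n$ plus small errors, by the superadditive decomposition of Lemma~\ref{lem:super}, since the windows partition $[0,n]$.
\item $\int_0^m(\bar f(b+x)-\bar f(b))^2/x\,dx$.
\item The analogous quantity at the right endpoint.
\end{enumerate}
For (ii), use the identity $\bar f(b+x)-\bar f(b)=x\bigl(\frac{f(b+x)-f(b)}{x}-\rho\bigr)$ and compare the pointwise value with an average over nearby pairs. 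For $x$ in the shell range $[m/2,m]$, the mean of (ii) is controlled by the chosen shell contribution divided by $n$, giving $Cm/\log n$. For smaller $x$, summing over all dyadic shells below $m$ gives only $O(m)$, which is too large.

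To fix this, choose the window scale slightly differently: take $b$ random and use the $1/x$ weight. Then $\mathbb E\int_0^m(\bar f(b+x)-\bar f(b))^2/x\,dx\approx\frac1n\int\!\!\int_{|x-y|\le m}\frac{(\bar f(x)-\bar f(y))^2}{|x-y|}$, which is $\sum_{\text{shells}\le m}2^k\times(\text{shell contribution})$. Dyadic shells again give a factor $2^{k}$ weight, so the sum is dominated by the top shells. I would then pick $m$ so that the $\log\log$-many shells just below $m$ all have small contribution, which is possible by pigeonholing on blocks of consecutive shells. Geometric decay of the weights $2^{k-\log_2 m}$ handles the lower shells, since each shell contributes at most $C n$ and the weight is $2^{-j}$.

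\textbf{Endpoint adjustment.} Given $a$, the restricted function ends at $\bar f(b+m)-\bar f(b)+\rho m$. This may differ from $a$ by a large amount, although on average the difference is at most $\sqrt{m/\log n}$ by (ii) at $x=m$. I would bend the last stretch of length $L\approx|\text{difference}|$ using all up or all down steps, and bound the change in $\Theta$ and in the integrals by $CL^2\log$-type terms. This is the main obstacle. If it fails, I would use the induction hypothesis \eqref{eq:induction} differently: replace the end by a near-optimizer on the short piece.
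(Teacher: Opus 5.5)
Your endpoint adjustment is where the plan breaks. Independently of its cost in $\Theta$, a correction concentrated near $x=m$ violates the second estimate of \eqref{eq:integral}, and that estimate is exactly what Proposition~\ref{prop:1} uses to bound the neighbour interactions $I_{i,i+1}$. Write $g_b(x)=f(b+x)-f(b)$. Suppose $h=g-g_b$ vanishes on $[0,m-L]$ and $|h(m)|=2s=|g_b(m)-a|$. Then $\bar g(m)-\bar g(x)=(\bar g_b(m)-\bar g_b(x))+h(m)$ for $x\le m-L$, hence
\[
\int_0^m\frac{(\bar g(m)-\bar g(x))^2}{m-x}\,dx\ \ge\ 2s^2\log\frac{m}{L}-\int_0^m\frac{(\bar g_b(m)-\bar g_b(x))^2}{m-x}\,dx .
\]
Nothing in your window selection forces $s$ below order $\sqrt{m/\log n}$. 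With $L\approx s$ the right side is then of order $m$, a factor $\log n$ too large. The same objection applies to your fallback of replacing a short end piece by a near-optimizer. Moving a concentrated bend into the interior does not help either, since then $\theta(h):=\int_0^m\!\int_0^m\big(\frac{h(x)-h(y)}{x-y}\big)^2dx\,dy$ is itself of order $s^2\log(m/L)$.

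So the correction must be spread over a length comparable to $m$. The paper flips $s$ up-steps at points of $[m/3,2m/3]$ spaced at least $m/(10s)$ apart; these exist by Lemma~\ref{lem:flat} and $1-|\rho|\ge n^{-0.01}$. This gives $|h(x)|\le Csx/m$, $|h(m)-h(x)|\le Cs(m-x)/m$ and $\theta(h)\le Cs\log m+Cs^2$. Even so, the cross term $\int\!\int\frac{(h(x)-h(y))(\bar g_b(x)-\bar g_b(y))}{(x-y)^2}$ is out of reach of Cauchy--Schwarz in $\dot H^{1/2}$, which gives only $\sqrt{\theta(h)\,\theta_m^\rho(g_b)}\approx s\sqrt m\approx m/\sqrt{\log n}$. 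The missing idea is to pair the $|x-y|^{-7/3}$ seminorm of $h$ with the $|x-y|^{-5/3}$ seminorm of $\bar g_b$. The former is at most $Cs+Cs^2m^{-1/3}$, since $h$ is smooth at scale $m/s$. The window must then be selected so that the latter is at most $A^{2/3}m^{4/3}/\log n$. That is why the scale is chosen by pigeonholing the weighted sum $\sum_{\ell\le k}2^{(\ell-k)/3}I_\ell(f)\le Cn/\log n$ of shell contributions over $k$. Pigeonholing a weighted sum directly also removes the $\log\log n$ loss of your block argument, which as stated only gives $Cn\log\log n/\log n$.

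Second, ``each quantity at most its mean times a constant'' cannot yield \eqref{eq:theta62}. That bound demands an \emph{additive} excess $O(A^{2/3}/\log n)$ over $\sigma_n^\rho/n$, whereas Markov's inequality only gives $\Theta_m^\rho(g_b)/m\le K\sigma_n^\rho/n$. What is needed is a lower bound valid for every window: by \eqref{eq:induction} and \eqref{eq:23}, $\Theta_m^\rho(g_b)/m\ge\sigma_n^\rho/n-A/\log m$ for all $b$. Together with $\sum_b\Theta_m^\rho(g_b)\le m\,\Theta_n^\rho(f)$, this forces the windows with excess at least $A^{2/3}/\log m$ to have density at most $1-\tfrac14A^{-1/3}$. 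All other bad events (large endpoint integrals, large $5/3$-seminorm, large $|\bar f(b+m)-\bar f(b)|$) must then be thresholded at $A^{2/3}$ times their means. Each then has density $O(A^{-2/3})$, so together they cannot cover the good windows. This is the source of the $A^{2/3}$ in the statement, and of the bound $s\le A^{1/3}\sqrt{m/\log n}$ used in the adjustment. You sensed that the induction hypothesis must enter, but this concentration argument is the step your selection is missing.
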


\begin{proof}
Let $f\in \mathcal S^{\rm loc}_n$ be the function achieving the minimum in the definition of $\sigma _n^{\rho }$. Our function $g$ will be constructed by linearly tilting a restriction of $f$ to a carefully chosen interval $[i,i+m]\subseteq [0,n]$. The proof contains four steps: choosing the dyadic scale of $m$, choosing the exact length $m$, choosing the right starting point $i$ and adding a linear shift to the restriction.

{\bf Step 1: Choosing the scale.}
We choose our dyadic scale $k$ in the interval $[k_0,k_1]$ where $k_0:=\lceil \log _2(n^{1/3})\rceil +1$ and $k_1:=\lfloor \log _2 (n^{2/3}) \rfloor -1$. For $k\le k_1$ define  
    \begin{equation}
        I_k(f):= \int _{\substack{x, y\in [0,n] \\ 2^{k} < |x-y|\le 2^{k+1}}} \Big( \frac{f(x)-f(y)}{x-y}-\rho  \Big)^2 dx \, dy .
    \end{equation}
   We have that 
\begin{equation}\label{eq:scale}
    \begin{split}
        \sum _{k=1}^{k_1} 2^{-k/3} \sum _{\ell=1}^{k} 2^{\ell/3} I_\ell(f)&= \sum _{\ell=1}^{k_1} 2^{\ell/3} I_\ell(f) \sum _{k=\ell }^{k_1}2^{-k/3}\\
        &\le \sum _{\ell =1}^{k_1} 2^{\ell/3} I_\ell (f) C2^{-\ell/3} =C\sum _{\ell =1}^{k_1} I_\ell (f)\le C\theta _n^{\rho }(f) \le  Cn,
    \end{split}
    \end{equation}
    where the last inequality follows from Lemma~\ref{lem:11}. It follows that there exists $k\in [k_0,k_1]$ such that
    \begin{equation}\label{eq:scale2}
        \sum _{\ell =1}^{k}2^{\ell/3} I_\ell  (f) \le C\frac{2^{k/3}n}{\log n},
    \end{equation}
    since otherwise the sum on the left hand side of \eqref{eq:scale} would be too large contradicting \eqref{eq:scale}. We fix such a scale $k\in [k_0,k_1]$ and note that in particular we have $I_k(f)\le Cn/(\log n)$.
    
    {\bf Step 2: Choosing the exact length.}
    Letting $\bar f(x):=f(x)-\rho x$ we have that 
    \begin{equation}
    \begin{split}
        \sum _{m=2^k}^{2^{k+1}} \sum _{i=0}^{n-m} \big( \bar f(i+m)-\bar f(i) \big) ^2 &\le C2^{k}n + 2\int _{2^k}^{2^{k+1}} \int _{0}^{n-r} \big( \bar f(x+r)-\bar f(x) \big) ^2 dx\, dr \\
        &\le C2^{k}n +C4^{k} I_k(f) \le \frac{C4^kn}{\log n},
    \end{split}
    \end{equation}
    where in the first inequality we used that $(\bar f(i+m)-\bar f(i))^2 \le 2(\bar f(x+r)-\bar f(x))^2$ for any $x\in [i,i+1]$ and $r\in [m,m+1]$ unless $|\bar f(i+m)-\bar f(i)|\le 7$ and such $i,m$ will contribute at most $C2^kn$ to the sum. It follows that there exists $m\in [2^k,2^{k+1}]$ such that 
\begin{equation}\label{eq:1}
    \sum _{i=0}^{n-m} (\bar f(i+m)-\bar f(i))^2 \le \frac{C2^kn}{\log n} \le \frac{Cnm}{\log n}.
\end{equation}
    This will be the length $m$ from the statement of the lemma.

    {\bf Step 3: Choosing the starting point.}
    Next, for $i\le n-m$ let $g_i\in \mathcal S_m^{\rm loc}$ defined by $g_i(x):=f(i+x)-f(i)$ for $x\in [0,m]$. We would like to choose an appropriate $i\le n-m$ and define the function $g$ to be a linear shift of $g_i$. To this end, we define a few sets of bad integers $i$, and then choose an $i$ outside these sets. The sets $B_1,B_2,B_3,B_4$ will measure various ways in which $\bar f$ might not be flat on the interval $[i,i+m]$; the complements of $B_2,B_3$ will ensure that \eqref{eq:integral} is satisfied and the complement of $B_5$ will ensure that \eqref{eq:theta62} is satisfied. Define 
    \begin{equation}
    \begin{split}
        B_1&:= \bigg\{ i\le n-m: |\bar f(i+m)-\bar f(i)|\ge  \sqrt{A^{2/3}\frac{m}{\log n}} \bigg\}\\
        B_2&:=\bigg\{ i\le n-m: \int _i^{i+m}\frac{(\bar f(x)-\bar f(i))^2}{x-i} dx \ge A^{2/3} \frac{m}{\log n} \bigg\}\\
        B_3&:=\bigg\{ i\le n-m: \int _i^{i+m}\frac{(\bar f(i+m)-\bar f(x))^2}{i+m-x} dx \ge A^{2/3} \frac{m}{\log n} \bigg\}\\
        B_4&:=\bigg\{ i\le n-m: \int _i^{i+m}\int _i^{i+m}\frac{(\bar f(x)-\bar f(y))^2}{|x-y|^{5/3}} dx \, dy \ge A^{2/3} \frac{m^{4/3}}{\log n} \bigg\}\\
        B_5&:=\bigg\{ i\le n-m : \frac{\Theta _m^{\rho }(g_i)}{m} \ge  \frac{\Theta _n^{\rho }(f)}{n}+\frac{A^{2/3}}{\log m}  \bigg\}.
    \end{split}
    \end{equation}

    Next we bound the sizes of $B_1,B_2,B_3,B_4,B_5$. It follows from \eqref{eq:1} that $|B_1|\le CA^{-2/3}n$.

    We turn to bound the size of $B_2$. We have that 
\begin{equation}
\begin{split}
    \sum _{i=0}^{n-m} \int _i^{i+m}\frac{(\bar f(x)-\bar f(i))^2}{x-i}dx &\le Cn\log m+ 2\int _0^{n-m} \int _{y+2}^{y+m} \frac{\big( \bar f(x)-\bar f(y) \big) ^2}{x-y} dx \, dy \\
    &\le Cn\log m + C\sum _{\ell =1}^{k} \int _{\substack{0\le y\le x\le n \\ 2^{\ell } < x-y\le 2^{\ell +1}}}  \frac{\big( \bar f(x)-\bar f(y) \big) ^2}{x-y} dx \, dy \\
    &\le Cn\log m + C\sum _{\ell =1}^{k} 2^\ell I_\ell (f) \le \frac{Cnm}{\log n},
\end{split}
\end{equation}
where in the first inequality we used that $(\bar f(x)-\bar f(i))^2\le 2 (\bar f(x)-\bar f(y))^2$ for any $y\in [i,i+1]$ unless $|\bar f(x)-\bar f(i)|\le 3$ and the contribution from such $(i,x)$ to the sum will be at most $Cn\log m$. The last inequality follows from \eqref{eq:scale2} and the fact that $m\ge 2^k$. From this it follows that $|B_2|\le CA^{-2/3}n$. By the same arguments we also have that $|B_3|\le CA^{-2/3}n$.

Similarly, to bound $B_4$ we write
\begin{equation}
\begin{split}
    \sum _{i=0}^{n-m} \int _i^{i+m}\int _i^{i+m}\frac{(\bar f(x)-\bar f(y))^2}{|x-y|^{5/3}} dx \, dy \le 2m \int _0^{n-m}\int _y^{y+m} \frac{(\bar f(x)-\bar f(y))^2}{(x-y)^{5/3}} dx \, dy \\
    \le Cmn + Cm\sum _{\ell =1}^{k} 2^{\ell /3} I_\ell (f) \le Cmn +Cm\frac{2^{k/3}n}{\log n} \le \frac{Cm^{4/3}n}{\log n}.
\end{split}
\end{equation}
where in the first inequality we used that each pair $(x,y)$ is counted at most $m$ times in the sum, in the second inequality the term $Cmn$ comes from pairs $x>y$ with $x-y\le 2$, and in the third inequality we used \eqref{eq:scale2}. It follows that $|B_4|\le CA^{-2/3}n$.

Finally, we bound the size of $B_5$. We have that 
\begin{equation}
    \sum _{i=1}^{n-m} \Theta _m^{\rho }(g_i) \le m\Theta _n^{\rho }(f),
\end{equation}
since any cell $(i,j)$ in the definition of $\tilde \theta _n$ and similarly, any pair $(x,y)$ in the integral in $\theta _n^{\rho }$, will be counted at most $m$ times in the sum $ \sum _{i=1}^{n-m} \Theta _m^{\rho }(g_i)$. Moreover, by the inductive hypothesis and \eqref{eq:23}, for all $i\le n-m$ we have
$$
\frac{\Theta _m^{\rho }(g_i)}{m}\ge \sigma ^{\rho } -\frac{A}{\log m}\ge \frac{\sigma _n^{\rho }}{n}-\frac{A}{\log m}= \frac{\Theta _n^{\rho }(f)}{n}-\frac{A}{\log m}.
$$ Thus, if $|B_5|\ge n/2$ then
\begin{equation}
\begin{split}
     \Theta _n^{\rho }(f) \ge \sum _{i=1}^{n-m} \frac{\Theta _m^{\rho }(g_i)}{m} &\ge |B_5| \Big( \frac{\Theta _n^{\rho }(f)}{n}+\frac{A^{2/3}}{\log m} \Big)+ (n-m-|B_5|) \Big( \frac{\Theta _n^{\rho }(f)}{n} -\frac{A}{\log m} \Big)\\
     &= (n-m) \frac{\Theta _n^{\rho }(f)}{n} -  \frac{A(n-m)}{\log m} +|B_5| \Big( \frac{A}{\log m}+ \frac{A^{2/3}}{\log m} \Big) \\
     &\ge \Theta _n^{\rho }(f) -\frac{An}{\log m} +|B_5| \Big( \frac{A}{\log m} +\frac{A^{2/3}}{2\log m} \Big),
\end{split}
\end{equation}
where in the last inequality the term $m\frac{\Theta _n^{\rho }(f)}{n}$ can be absorbed in the error term since $m\le n^{2/3}$ and using Lemma~\ref{lem:11}. Hence, in any case we obtain that $|B_5|\le \frac{A}{A+\frac{1}{2}A^{2/3}}n \le (1-\frac{1}{4}A^{-1/3})n$, where the last inequality holds when $A$ is sufficiently large.

Combining the bounds on $B_1,B_2,B_3,B_4,B_5$ we obtain that as long as $A$ is sufficiently large, there is $i\le n-m$ such that $i\notin B_1\cup B_2\cup B_3\cup B_4\cup B_5$. The function $g_i\in \mathcal S^{\rm loc}_m$ satisfies \eqref{eq:theta62} and \eqref{eq:integral} since $i\notin B_2\cup B_3\cup B_5$. However, we might not have  $g_i(m)=a$.

{\bf Step 4: A linear shift.}
In the last step we add an (approximately) linear shift to $g_i$ to form $g$ with $g(m)=a$. We will then show that $g$ retains the bounds \eqref{eq:theta62} and \eqref{eq:integral} using that $i\notin B_1\cup B_4$. Without loss of generality, suppose that $g_i(m)>a$ and let $s:=(g_i(m)-a)/2\le A^{1/3}\sqrt{m/\log n}$, where the last inequality follows since $i\notin B_1$. We claim that there are integers $m/3\le j_1<\dots <j_s\le 2m/3$ such that $j_ \ell \ge j_{\ell -1}+m/(10s)$ and such that $g_i(j_\ell )<g_i(j_\ell+1)$. To see this, we let $J_1,\dots , J_{2s}$ be disjoint intervals (with integer endpoints) of length $\lceil m/(10s)\rceil $, separated by distance $m/(10s)$ from each other. Since $i\notin B_5$ and by Lemma~\ref{lem:11} we have $\theta_m ^{\rho }(g_i)<Cm$ and therefore for at least half of these intervals $J_\ell$, we have 
\begin{equation}\label{eq:3.24}
    \int _{J_\ell }\int _{J_\ell } \bigg( \frac{\bar g_i(x)-\bar g_i(y)}{x-y} \bigg)^2dx\,dy \le C\frac{m}{s}.
\end{equation}
Call these intervals $J_1',\dots ,J_s'$. Since 
$1-|\rho |\ge n^{-0.01}\ge m^{-0.03}$ and $n\ge e^{\sqrt{A}}$ we have that $20C_B(m/s)^{2/3}<(1-|\rho |)(m/s)$ where $C_B$ is from Lemma~\ref{lem:flat}, with $B$ determined by \eqref{eq:3.24}. Thus, by Lemma~\ref{lem:flat}, applied to $g_i$ restricted to the interval $J_\ell '$, it follows that the function remains at distance $(1-|\rho |)|J_\ell '|/2$ from a straight line with slope $\rho $ and so it cannot take only down steps in this interval. Hence, there must be $j_\ell \in J_\ell '$ such that $g_i(j_\ell)<g_i(j_\ell +1)$.

\begin{figure}[ht]
\centering
\begin{tikzpicture}[scale=0.4]

    \draw[->] (-0.5,0) -- (25.8,0);

    \draw (5,0.18) -- (5,-0.18);
    \draw (13,0.18) -- (13,-0.18);
    \draw (20,0.18) -- (20,-0.18);

    \node[below] at (5,-0.18) {$j_1$};
    \node[below] at (13,-0.18) {$j_2$};
    \node[below] at (20,-0.18) {$j_3$};

    \draw[thick]
        (0,0) --
        (1,1) --
        (2,2) --
        (3,3) --
        (4,2) --
        (5,3) --
        (6,4) --
        (7,3) --
        (8,4) --
        (9,3) --
        (10,4) --
        (11,5) --
        (12,4) --
        (13,5) --
        (14,6) --
        (15,7) --
        (16,6) --
        (17,7) --
        (18,8) --
        (19,7) --
        (20,8) --
        (21,9) --
        (22,8) --
        (23,9) --
        (24,10) --
        (25,9);

    \foreach \x/\y in {
        0/0,1/1,2/2,3/3,4/2,5/3,6/4,7/3,8/4,9/3,
        10/4,11/5,12/4,13/5,14/6,15/7,16/6,17/7,
        18/8,19/7,20/8,21/9,22/8,23/9,24/10,25/9
    }
        \fill (\x,\y) circle (2pt);

    \draw[thick,dashed]
        (0,0) --
        (1,1) --
        (2,2) --
        (3,3) --
        (4,2) --
        (5,3) --
        (6,2) --
        (7,1) --
        (8,2) --
        (9,1) --
        (10,2) --
        (11,3) --
        (12,2) --
        (13,3) --
        (14,2) --
        (15,3) --
        (16,2) --
        (17,3) --
        (18,4) --
        (19,3) --
        (20,4) --
        (21,3) --
        (22,2) --
        (23,3) --
        (24,4) --
        (25,3);

    \foreach \x/\y in {
        0/0,1/1,2/2,3/3,4/2,5/3,6/2,7/1,8/2,9/1,
        10/2,11/3,12/2,13/3,14/2,15/3,16/2,17/3,
        18/4,19/3,20/4,21/3,22/2,23/3,24/4,25/3
    }
        \fill (\x,\y) circle (2pt);

    \node[right] at (14.3,7.7) {$g_i$};
    \node[right] at (14.3,3.7) {$g$};

    \draw (26.2,9) -- (26.6,9);
    \draw (26.2,3) -- (26.6,3);
    \node[right] at (26.6,9) {$g_i(m)$};
    \node[right] at (26.6,3) {$a$};

\end{tikzpicture}
\caption{The function $g$ takes the same up down steps as $g_i$ except for the steps at $j_1,\dots ,j_s$ in which $g$ goes down instead of up. This modification will make sure that $g(m)=a$ without increasing $\Theta _m^{\rho }$ by much.}
\label{fig:3}
\end{figure}

Once we found these integers $j_1,\dots ,j_s$, we let $g\in \mathcal S^{\rm loc}_m$ be the function that takes exactly the same up down steps as $g_i$, but takes a down step at $j_\ell$ instead of an up step (See Figure~\ref{fig:3}). We have that $g(m)=a$ and therefore it suffices to verify that this $g$ still satisfies the other requirements. To this end, we let $h:=g-g_i$ and write 
\begin{equation*}
    \theta _m^{\rho }(g)=\theta _m^{\rho }(g_i) + 2I_1 +I_2,
\end{equation*}
where
\begin{equation*}
    I_1:=\int_0^m \int_0^m  \frac{(h(x)-h(y))(\bar g_i(x)-\bar g_i(y))}{(x-y)^2}dx\, dy ,\quad I_2:=\int _0^m\int _0^m \Big(\frac{h(x)-h(y)}{x-y}\Big)^2 dx\, dy.
\end{equation*}

We start by bounding $I_2$. We have that $|h(x)-h(y)|\le Cs|x-y|/m$ when $|x-y|\ge m/s$ and $|h(x)-h(y)|\le C$ when $|x-y|\le m/s$. Moreover $h(x)=h(y)$ as long as there is no $j_\ell$ such that $x<j_\ell <y$. Thus, we have that 
\begin{equation*}
I_2 \le s\int _0^{m/s}\int _0^{m/s} \frac{C}{(x+y+1)^2} dx\, dy + \int _0^m\int _0^m \Big(\frac{Cs}{m} \Big)^2 \le Cs\log m +Cs^2\le \frac{CA^{2/3}m}{\log n},
\end{equation*}
where the first integral bounds the contribution coming from pairs $x,y$ with $|x-y|\le m/s$ such that $x<j_\ell<y$ for some $\ell \le s$, and the second integral bounds the contribution coming from pairs with $|x-y|\ge m/s$.
We turn to bound $|I_1|$. By Cauchy-Schwarz inequality we have that $|I_1|\le \sqrt{I_3I_4}$ where
\begin{equation}
\begin{split}
     I_3:= \int_0^m \int _0^m  \frac{(h(x)-h(y))^2}{|x-y|^{7/3}} dx\, dy  \quad \text{and}\quad I_4:=\int_0^m \int_0^m  \frac{(\bar g_i(x)-\bar g_i(y))^2}{|x-y|^{5/3}} dx\, dy.
\end{split}
\end{equation}
We bound $I_3$ similarly to $I_2$ by considering separately the contribution from pairs with $|x-y|\le m/s$ and pairs with $|x-y|\ge m/s$ 
\begin{equation*}
\begin{split}
     I_3 &\le s\int _0^{m/s} \int _0^{m/s} \frac{C}{(x+y+1)^{7/3}} dx\,dy + \Big(\frac{s}{m}\Big)^2 \int _0^m \int _0^m \frac{C}{(y-x)^{1/3}} dx \, dy\\
     &\le s\int _0^{m/s} \frac{C}{(x+1)^{4/3}} dx +\Big(\frac{s}{m}\Big)^2 \int _0^m C(m-x)^{2/3} dx\le Cs +\frac{Cs^2}{m^{1/3}} \le \frac{CA^{2/3}m^{2/3}}{\log n}.
\end{split}
\end{equation*}
Since $i\notin B_4$ we have that $I_4\le A^{2/3}m^{4/3}/\log n$ and therefore $|I_1|\le CA^{2/3}m/\log n$. This gives that $\theta _m^{\rho }(g)\le \theta _m^{\rho }(g_i)+CA^{2/3}m/\log n$. Moreover, we have that $\tilde \theta _m(g)\le \tilde \theta _m (g_i) +Cs\log m$, since the number of cells with hook length $x$  whose hook changed is at most $Csx$  (each of which will contribute $Cx^{-2}$ to $\tilde \theta _m $). Hence, using that $i\notin B_5$ we have that
\begin{equation}
    \Theta _m^{\rho }(g)\le \Theta _m^{\rho }(g_i)+\frac{CA^{2/3}m}{\log n} \le m\frac{\Theta _n^{\rho }(f)}{n} +\frac{CA^{2/3}m}{\log n} .
\end{equation}

It remains to prove that $g$ satisfies \eqref{eq:integral}. We have 
\begin{equation}
    \int _0^{m}  \frac{\bar g(x)^2}{x} dx \le 2\int _0^{m}  \frac{\bar g_i(x)^2}{x} dx +2\int _0^{m}  \frac{h(x)^2}{x} dx \le  \frac{CA^{2/3}m}{\log n},
\end{equation}
where in the last inequality we used that $i\notin B_2$ and that $h(x)\le Csx/m\le CA^{1/3}x/\sqrt{m\log n}$.
The second estimate in \eqref{eq:integral} is similar.
\end{proof}

\subsection{Concatenation}

In the following proposition we finish the proof of the induction in \eqref{eq:induction} using Lemma~\ref{lem:12}.

\begin{prop}\label{prop:1}
    We have that 
\begin{equation}
    \frac{\sigma _n^{\rho }}{n} \ge \sigma ^{\rho} -\frac{A}{\log n}
\end{equation}
\end{prop}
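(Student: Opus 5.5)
The plan is to use the seed $g$ from Lemma~\ref{lem:12} and concatenate copies of it into a function on a long interval of length $L=Km$, with $K$ large. This gives an upper bound $\sigma_L^{\rho}\le \Theta_L^{\rho}(F)$ of the form $L\big(\sigma_n^\rho/n + CA^{2/3}/\log n\big)$. Dividing by $L$ and letting $K\to\infty$, using $\sigma_L^\rho/L\to\sigma^\rho$, we get
\[
\sigma^\rho\le \frac{\sigma^\rho_n}{n}+\frac{CA^{2/3}}{\log n}\le \frac{\sigma_n^\rho}{n}+\frac{A}{\log n},
\]
once $A$ is large compared to $C$, which closes the induction \eqref{eq:induction}.

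\textbf{Construction of $F$.} To keep $F$ within $O(1)$ of the line $\rho x$ at the block endpoints $jm$, choose integers $a_1,\dots,a_K$ greedily. Each $a_j\equiv m \pmod 2$ with $|a_j-\rho m|\le 10$, and the partial sums satisfy $|a_1+\dots+a_j-\rho jm|\le 2$. For each $j$ take the corresponding $g^{(j)}$ from Lemma~\ref{lem:12}, and set
\[
F(x)=a_1+\dots+a_{j-1}+g^{(j)}(x-(j-1)m), \qquad x\in[(j-1)m,jm].
\]
Then $\bar F(x)=F(x)-\rho x$ satisfies $|\bar F(jm)|\le 2$ at every block endpoint.

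\textbf{Decomposing $\Theta_L^\rho(F)$.} The quantity splits into diagonal block terms, interaction terms between blocks, and cell terms.
\begin{itemize}
\item \emph{Diagonal blocks.} The terms with $x,y$ in the same block contribute $\sum_j \frac18\theta_m^\rho(g^{(j)})$.
\item \emph{Hook cells.} Cells lying within one block are counted in $\tilde\theta_m(g^{(j)})$. Cells whose hooks straddle a block boundary have hook length $h$ at least the distance to the boundary. The number of cells with hook length $h$ near one boundary is $O(h)$ and $\varphi(h)\le Ch^{-2}$, so a naive bound gives $O(\log m)$ per boundary. That is too crude only if $\log m$ per block is not $o(m/\log n)$, and here it is negligible. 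A sharper estimate, using flatness, bounds the count by $O(\min(h,m))$, so the total is $K\cdot O(\log m)$.
\item \emph{Off-diagonal pairs.} For $x$ in block $j$ and $y$ in block $j'>j$, write $\bar F(y)-\bar F(x)=u_j(x)+v+w_{j'}(y)$. Here
\[
u_j(x)=\bar F(jm)-\bar F(x)
\]
is controlled by the second estimate in \eqref{eq:integral}, $v=O(1)$ comes from the endpoint drift, and
\[
w_{j'}(y)=\bar F(y)-\bar F((j'-1)m)
\]
is controlled by the first estimate. By $(a+b+c)^2\le 3(a^2+b^2+c^2)$, the contribution is bounded by three pieces:
\[
\sum_{j<j'}\int\!\!\int \frac{u_j(x)^2}{(y-x)^2}, \qquad \sum_{j<j'}\int\!\!\int\frac{w_{j'}(y)^2}{(y-x)^2}, \qquad \int\!\!\int_{|x-y|\ge 1}\frac{C}{(x-y)^2}.
\]
For the $u$-piece, integrating over $y\ge jm$ gives $\int_{jm}^\infty (y-x)^{-2}\,dy=(jm-x)^{-1}$. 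So it is bounded by $\sum_j\int u_j^2/(jm-x)\le K\,CA^{2/3}m/\log n$ by \eqref{eq:integral}, and symmetrically for the $w$-piece. The constant piece is $O(L\log m/m)\cdot$ small; more precisely, $O(K\log m)$ for pairs near boundaries, plus $O(K)$ per block beyond. Since $m\ge n^{1/3}$, all of these are $\le KCA^{2/3}m/\log n$.
\end{itemize}

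\textbf{Conclusion and main obstacle.} Summing the three parts with \eqref{eq:theta62} gives
\[
\Theta_L^\rho(F)\le Km\Big(\frac{\sigma_n^\rho}{n}+\frac{CA^{2/3}}{\log n}\Big),
\]
which is the bound needed above. The main obstacle is the interaction term. One must check that the decomposition loses nothing at long range: pairs with $|x-y|\gg m$ have $O(1)$ numerators, hence summable $1/(x-y)^2$ tails. This is exactly why the endpoint drift must be kept bounded via the choice of $a_j$, since otherwise the drift $\bar F$ would grow like $\sqrt K$ and produce a $\log K$ divergence.
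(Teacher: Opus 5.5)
Your proof is correct and follows the paper's argument. You choose the endpoints adaptively so that $|\bar F(jm)|\le 2$, concatenate the seeds from Lemma~\ref{lem:12}, control the cross terms through the two integrals in \eqref{eq:integral} plus an $O(\log m)$ hook-cell interaction per boundary, and let $K\to\infty$. The only difference is bookkeeping: you treat all block pairs at once with a three-term split and the tail integral $\int_{jm}^\infty (y-x)^{-2}\,dy=(jm-x)^{-1}$, whereas the paper handles adjacent and non-adjacent blocks separately. Your constant piece is also simpler than you state: $v=0$ for adjacent blocks, so it only involves $|x-y|\ge m$ and totals $O(K)$, not ``$O(K)$ per block.''
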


\begin{figure}[ht]
\centering


\def\ZoomPart{24,21,21,19,15,13,10,8,7,6,5,5,2}

\pgfmathsetmacro{\zoomcell}{0.12}

\newcommand{\DrawYoungRussian}[1]{%
    \foreach[count=\i from 1] \len in #1 {%
        \foreach \j in {1,...,\len} {%
            \draw (\j-\i,\i+\j-2) --
                  (\j-\i+1,\i+\j-1) --
                  (\j-\i,\i+\j) --
                  (\j-\i-1,\i+\j-1) -- cycle;
        }%
    }%
}

\newcommand{\DrawZoomUpperBoundary}{%
    \draw[line width=1pt]
    (-13,13) --
    (-11,15) --
    (-10,14) --
    (-7,17) --
    (-5,15) --
    (-4,16) --
    (-3,15) --
    (-2,16) --
    (-1,15) --
    (0,16) --
    (1,15) --
    (3,17) --
    (4,16) --
    (7,19) --
    (8,18) --
    (10,20) --
    (11,19) --
    (15,23) --
    (16,22) --
    (18,24) --
    (20,22) --
    (23,25) --
    (24,24);
}

\newcommand{\DrawOneCopy}{%
    \DrawYoungRussian{\ZoomPart}
    \DrawZoomUpperBoundary
    \draw[red,thick] (-13,13) -- (24,24);
}

\newcommand{\DrawRectBlock}[1]{%
    \foreach \i in {1,...,13} {%
        \foreach \j in {1,...,24} {%
            \filldraw[fill=#1!22,draw=#1!45,line width=0.2pt]
                (\j-\i,\i+\j-2) --
                (\j-\i+1,\i+\j-1) --
                (\j-\i,\i+\j) --
                (\j-\i-1,\i+\j-1) -- cycle;
        }%
    }%
}

\newcommand{\DrawRectBoundary}{%
    \draw[cyan!70,line width=0.7pt]
        (0,0) -- (24,24) -- (11,37) -- (-13,13) -- cycle;
}

\begin{tikzpicture}[line width=0.35pt]
\begin{scope}[x=\zoomcell cm,y=\zoomcell cm]

    \begin{scope}[shift={(13,-13)}]
        \DrawRectBlock{cyan}
    \end{scope}

    \begin{scope}[shift={(50,-2)}]
        \DrawRectBlock{cyan}
    \end{scope}

    \begin{scope}[shift={(26,-26)}]
        \DrawRectBlock{cyan}
    \end{scope}

    \DrawOneCopy

    \begin{scope}[shift={(37,11)}]
        \DrawOneCopy
    \end{scope}

    \begin{scope}[shift={(74,22)}]
        \DrawOneCopy
    \end{scope}

    \begin{scope}[shift={(13,-13)}]
        \DrawRectBoundary
    \end{scope}

    \begin{scope}[shift={(50,-2)}]
        \DrawRectBoundary
    \end{scope}

    \begin{scope}[shift={(26,-26)}]
        \DrawRectBoundary
    \end{scope}

    \fill (-13,13) circle[radius=2.2pt];
    \fill (24,24) circle[radius=2.2pt];
    \fill (61,35) circle[radius=2.2pt];
    \fill (98,46) circle[radius=2.2pt];

\end{scope}
\end{tikzpicture}

\caption{To prove Proposition~\ref{prop:1} we concatenate the flat almost minimizer constructed in Lemma~\ref{lem:12} to (essentially) itself many times and bound the interaction terms created by integrating and summing over bigger domains in $\theta ^{\rho }$ and $\tilde \theta $.}
\label{fig:4}
\end{figure}

\begin{proof}
Let $\ell \ge 1$ and define $f\in \mathcal S_{\ell m}^{\rm loc}$ as follows. Suppose by induction on $0\le i\le \ell$ that $f$ was defined for all $x\in [0,im]$. If $f(im)\ge \rho im$ we let $g_{i+1}=g$ where $g$ is from Lemma~\ref{lem:12} with $a\in [\rho m-2,\rho m]$. If $f(im)< \rho im$ we let $g_{i+1}=g$ where $g$ is from Lemma~\ref{lem:12} with $a\in [\rho m,\rho m+2]$. For $x\in [im,(i+1)m]$ define $f(x):=f(im)+g_{i+1}(x-im)$. In this construction we clearly have that $|f(im)-\rho im|\le 2$ for all $i\le \ell $.

We start by bounding $\theta _{\ell m}^{\rho }(f)$. Letting $\bar f(x):=f(x)-\rho x$ we have that 
\begin{equation*}
    \theta _{\ell m}^{\rho }(f)=\sum _{i=1}^{\ell }\theta _m^{\rho }(g_i)+2\sum _{1\le i< j\le \ell } I_{i,j},\quad \text{where} \quad I_{i,j} :=\int _{(i-1)m}^{im} \int _{(j-1)m}^{jm}\Big( \frac{\bar f(y)-\bar f(x)}{y-x} \Big)^2 dx\, dy.
\end{equation*}
Next, we bound the interaction terms $I_{i,j}$. When $j=i+1$ we use the inequality $(a+b)^2\le 2a^2+2b^2$ to obtain 
\begin{equation}
\begin{split}
    I_{i,i+1} &=\int _{(i-1)m}^{im} \int _{im}^{(i+1)m}\Big( \frac{\bar f(y)-\bar f(im)}{y-x} +\frac{\bar f(im)-\bar f(x)}{y-x}  \Big)^2 dx\, dy \\
    &\le 2\int _{(i-1)m}^{im} \int _{im}^{(i+1)m}\Big( \frac{\bar f(y)-\bar f(im)}{y-x} \Big)^2 + \Big( \frac{\bar f(im)-\bar f(x)}{y-x} \Big)^2  dx\, dy \\
    &\le  2 \int _{im}^{(i+1)m} \frac{(\bar f(y)-\bar f(im))^2}{y-im} dy +2\int _{(i-1)m}^{im} \frac{(\bar f(im)-\bar f(x))^2}{im-x} dx\\
    &= 2 \int _{0}^{m} \frac{\bar g_{i+1}(y)^2}{y} dy +2\int _{0}^{m} \frac{(\bar g_i(m)-\bar g_i(x))^2}{m-x} dx \le \frac{CA^{2/3}m}{\log n},
\end{split}
\end{equation}
where in the last inequality we used the bounds in Lemma~\ref{lem:12}.

Next, using that $|\bar f(im)|\le 2$ we obtain for $j\ge i+2$
\begin{equation}
\begin{split}
I_{i,j}&= \int _{0}^{m}  \int _0^{m} \Big(\frac{\bar f((j-1)m)+\bar g_j(y)-\bar f((i-1)m)-\bar g_i(x)}{ (j-i)m +y-x } \Big)^2 dx\, dy \\
&\le  \int _{0}^{m}  \int _{0}^{m} \frac{C +C\bar g_j(y)^2+ C\bar g_i(x)^2}{m^2(j-i)^2}  dx\, dy\\
&\le \frac{C}{(j-i)^2} + \frac{C}{m(j-i)^2}\int _0^m \bar g _j(y)^2 dy +  \frac{C}{m(j-i)^2}\int _0^m \bar g _i(x)^2 dx \\
&\le \frac{C}{(j-i)^2} + \frac{C}{(j-i)^2}\int _0^m \frac{\bar g _j(y)^2}{y} dy +  \frac{C}{(j-i)^2}\int _0^m \frac{\bar g _i(x)^2}{x} dx \le \frac{CA^{2/3}m}{(j-i)^2\log n},
\end{split}
\end{equation}
where in the last inequality we used again the bound in Lemma~\ref{lem:12}.

We get that 
\begin{equation}\label{eq:theta}
\begin{split}
    \theta _{\ell m}^{\rho } (f)&\le \sum _{i=1}^{\ell }\theta _m^{\rho }(g_i)   +2\sum _{i=1}^{\ell-1} \sum _{j=i+1}^{\ell } I_{i,j}\\
    &\le  \sum _{i=1}^{\ell }\theta _m^{\rho }(g_i) +  \frac{CA^{2/3}m}{\log n} \sum _{i=1}^{\ell -1} \sum _{j=i+1}^{\ell } \frac{1}{(j-i)^2} \le  \sum _{i=1}^{\ell }\theta _m^{\rho }(g_i) + \frac{CA^{2/3}m\ell }{\log n}. 
\end{split}
\end{equation}
We turn to bound $\tilde \theta (f)$. We have that 
    \begin{equation}
        \tilde \theta _{\ell m} (f) =\sum _{i=1}^\ell \tilde \theta _m(g_i) +\sum _{1\le i <j\le \ell } S_{i,j}, 
    \end{equation}
    where $S_{i,j}$ is the contribution to $\tilde \theta _{\ell m} $ coming from cells whose hooks have their endpoints at $[(i-1)m,im]$ and $[(j-1)m,jm]$. These are the cells in one of the blue rectangles in Figure~\ref{fig:4}. We have that $S_{i,i+1}\le C\log m$ since there are at most $Ch$ cells whose hooks are of length $h$ so they will contribute $Ch\cdot h^{-2} \le Ch^{-1}$ to $S_{i,i+1}$. Summing over $h$ leads to the bound. When $j\ge i+2$ we have that $S_{i,j}\le C/(j-i)^2$ since all the hook lengths will be at least $c(j-i)m$ and there will be at most $m^2$ of them. We obtain 
\begin{equation}\label{eq:tilde}
    \begin{split}
        \tilde \theta _{\ell m} (f) &=\sum _{i=1}^\ell \tilde \theta (g_i)  +\sum _{i=1}^{\ell -1} S_{i,i+1}+\sum _{i=1}^{\ell -2}\sum _{j=i+2}^{\ell } S_{i,j} \\
        &\le \sum _{i=1}^\ell \tilde \theta (g_i) +C\ell \log m + \sum _{i=1}^{\ell -2}\sum _{j=i+2}^{\ell } \frac{C}{(j-i)^2} \le \sum _{i=1}^\ell \tilde \theta (g_i)+C\ell \log m. 
    \end{split}
    \end{equation}
    Summing \eqref{eq:theta} and \eqref{eq:tilde} and using \eqref{eq:theta62} we obtain 
    \begin{equation}\label{eq:ellm}
    \begin{split}
        \Theta _{\ell m}^{\rho }(f) &\le \sum _{i=1}^{\ell }\Theta _m^{\rho }(g_i) + \frac{CA^{2/3}m\ell }{\log n} \\
        &\le \ell \Big(m\frac{\sigma _n^{\rho }}{n}+m\frac{CA^{2/3}}{\log n}\Big) +\frac{CA^{2/3}m\ell }{\log n} 
        \le m\ell \frac{\sigma _n^{\rho }}{n} + \frac{CA^{2/3}m\ell }{\log n}. 
        \end{split}
    \end{equation}
    Hence 
    \begin{equation}
        \frac{\sigma _{m\ell }^{\rho }}{m\ell } \le \frac{\Theta _{\ell m}^{\rho }(f)}{m\ell } \le \frac{\sigma _n^{\rho }}{n } + \frac{CA^{2/3}}{\log n}. 
    \end{equation}
Taking the limit $\ell \to \infty $ we obtain 
\begin{equation}
        \sigma ^{\rho } \le \frac{\sigma _n^{\rho }}{n } + \frac{CA^{2/3}}{\log n}\le \frac{\sigma _n^{\rho }}{n } + \frac{A}{\log n}, 
    \end{equation}
    where the last inequality holds as long as $A$ is sufficiently large.
\end{proof}

Proposition~\ref{prop:1} finishes the proof of the induction in \eqref{eq:induction}. Once the induction is finished we fix $A$ and allow the constants $C,c$ to depend on $A$.

\begin{proof}[Proof of Theorem~\ref{thm:local}]
    The estimate \eqref{eq:99} follows from \eqref{eq:23} and Proposition~\ref{prop:1}.
    The fact that $\rho \mapsto \sigma ^{\rho }$ is continuous follows since $\sigma _n^{\rho }/n$ is continuous in $\rho $ (it is the minimum of polynomials in $\rho $) and $\sigma _n^{\rho }/n$ converges uniformly to $\sigma ^{\rho }$.
\end{proof}

Using the proof of Proposition~\ref{prop:1} we can prove a version of Lemma~\ref{lem:12} in which the length $m$ is of order $n$. This is stated in the following corollary.

\begin{cor}\label{cor:lem1}
    For any $n\ge 1$ there exists $m\in [n/2,n]$ such that for any $a\in \mathbb Z$ with $a\equiv m (\!\!\!\mod 2)$ and $|a-\rho m|\le 10$, there is a function $g\in \mathcal S^{\rm loc}_m$ with $g(m)=a$ such that
    \begin{enumerate}
        \item We have 
        \begin{equation}\label{eq:theta63}
            \frac{\Theta _m^{\rho }(g)}{m} \le \frac{\sigma _n^{\rho }}{n} + \frac{C}{\log n} .
        \end{equation}
        \item Letting $\bar g(x)=g(x)-\rho x$ we have
        \begin{equation}\label{eq:64}
            \int _0^{m}  \frac{\bar g(x)^2}{x} dx \le \frac{Cn}{\log n} \quad \text{and} \quad  \int _0^{m}  \frac{(\bar g (m)-\bar g(x))^2}{m-x} dx \le \frac{Cn}{\log n}  
        \end{equation}
    \end{enumerate}
\end{cor}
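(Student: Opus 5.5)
The plan is to run the concatenation of Proposition~\ref{prop:1} at a finite number of repetitions $\ell$, chosen so that the resulting length $\ell m_0$ lies in $[n/2,n]$, and then read off both required properties of the concatenated function. We may assume $n$ is large; for bounded $n$ the statement is trivial after enlarging $C$. The reason is that $\Theta_m^{\rho}\le Cm$ by Lemma~\ref{lem:11}, and for $m\le n$ the integrals in \eqref{eq:64} are at most $Cn^2\le Cn/\log n$.

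First, apply Lemma~\ref{lem:12} at scale $n$, with $A$ now fixed. This is legitimate because Theorem~\ref{thm:local} has already been established, so the inductive hypothesis \eqref{eq:induction} holds for every $m<n$. If $1-|\rho|<n^{-0.01}$, Lemma~\ref{lem:12} is not needed: we instead take the trivial maximal staircase of Lemma~\ref{lem:11} on length $m=n$, adjusted at the endpoint. Lemma~\ref{lem:11} gives $\Theta_m^\rho\le Cn^{0.99}\log n$, so \eqref{eq:theta63} holds. Since $|\bar g|\le C$, both integrals in \eqref{eq:64} are at most $C\log n$. Otherwise, Lemma~\ref{lem:12} provides a length $m_0\in[n^{1/3},n^{2/3}]$ and, for each admissible endpoint $a$, a seed $g$. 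Set $\ell:=\lfloor n/m_0\rfloor$ and $m:=\ell m_0$. Then $m\in[n-m_0,n]\subseteq[n/2,n]$.

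Second, build $f\in\mathcal S^{\rm loc}_m$ exactly as in the proof of Proposition~\ref{prop:1}, except for the last block. For blocks $1,\dots,\ell-1$ choose endpoints $a\in[\rho m_0-2,\rho m_0+2]$ so that $|\bar f(im_0)|\le 2$ along the way. For the last block, the endpoint is chosen to hit the target $a$ of the corollary. Because $|a-\rho m|\le 10$ and $|\bar f((\ell-1)m_0)|\le 2$, the needed final increment differs from $\rho m_0$ by at most $12$. It also has the parity of $m_0$, since $a\equiv m=\ell m_0$ and $f((\ell-1)m_0)\equiv(\ell-1)m_0$ modulo $2$.

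Here is a small obstacle. Lemma~\ref{lem:12} only permits $|a-\rho m_0|\le 10$. There are two fixes. One is to note that the proof of Lemma~\ref{lem:12} works verbatim with $10$ replaced by any fixed constant, since only $s\le A^{1/3}\sqrt{m/\log n}+O(1)$ is used. The other is to steer the last two blocks, using the $\pm2$ freedom, so that the final increment lands within $10$. I will take the first route.

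The bound \eqref{eq:ellm} then gives
\[
\Theta_m^\rho(f)\le m\,\sigma_n^\rho/n+Cm/\log n,
\]
which is \eqref{eq:theta63}.

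Third, verify \eqref{eq:64}, which is the genuinely new part. Write
\[
\int_0^m\frac{\bar f(x)^2}{x}\,dx=\sum_{i=1}^{\ell}\int_{(i-1)m_0}^{im_0}\frac{\bar f(x)^2}{x}\,dx.
\]
On block $i$ we have $\bar f(x)=\bar f((i-1)m_0)+\bar g_i(x-(i-1)m_0)$, so $\bar f(x)^2\le 8+2\bar g_i(\cdot)^2$.
\begin{itemize}
\item For $i=1$, the block contributes at most $Cm_0/\log n$ by \eqref{eq:integral}.
\item For $i\ge2$, use $x\ge (i-1)m_0$ together with $\bar g_i(y)^2\le m_0\cdot \bar g_i(y)^2/y$. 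The block then contributes at most
\[
\frac{C}{i-1}\Big(m_0+\int_0^{m_0}\frac{\bar g_i(y)^2}{y}\,dy\Big)\le \frac{Cm_0}{(i-1)\log n}+\frac{Cm_0}{i-1}.
\]
\end{itemize}
Summing over $i$ gives $Cm_0\log\ell$ plus negligible terms. This is at most $Cn^{2/3}\log n\le Cn/\log n$. The term $8/x$ near $x=0$ is harmless because it only appears for $i\ge2$. The second integral in \eqref{eq:64} is handled symmetrically from the right endpoint, using the second bound of \eqref{eq:integral} and $|\bar f(m)|\le 12$.

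The main step to be careful about is this crude summation, where the additive constants $|\bar f(im_0)|\le2$ cost $m_0\log\ell$. That cost is only acceptable because $m_0\le n^{2/3}$.
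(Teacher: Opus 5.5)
Your proof is correct and follows essentially the paper's argument: concatenate $\ell=\lfloor n/m_0\rfloor$ copies of the Lemma~\ref{lem:12} seed as in Proposition~\ref{prop:1}, read \eqref{eq:theta63} off \eqref{eq:ellm}, and bound \eqref{eq:64} block by block using $|\bar f(im_0)|\le 2$ and \eqref{eq:integral}. You also handle two points the paper leaves implicit: the final endpoint, by relaxing the constant $10$ in Lemma~\ref{lem:12} (which only shifts $s$ by $O(1)$), and slopes with $1-|\rho|<n^{-0.01}$, where Lemma~\ref{lem:12} was never proved; incidentally, with your split $(a+b)^2\le 2a^2+2b^2$ the constant terms cost only $C/(i-1)$ per block rather than $Cm_0/(i-1)$, so your bound is looser than necessary but still valid.
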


\begin{proof}
    The function $f$ constructed in the proof of Proposition~\ref{prop:1} with $\ell :=\lfloor n/m \rfloor$ will satisfy \eqref{eq:theta63} by \eqref{eq:ellm}. This $f$ will satisfy the first estimate in \eqref{eq:64} since 
    \begin{equation}
    \begin{split}
        \int _0^{\ell m} \frac{\bar f(x)^2}{x}dx &\le \int _0^m \frac{\bar g_1(x)^2}{x}dx +  \sum _{i=1}^{\ell -1} \int _0^m \frac{\big( \bar f(im)+\bar g_{i+1}(x) \big) ^2}{x+im}dx \\
        &\le \int _0^m \frac{\bar g_1(x)^2}{x}dx +\sum _{i=1}^{\ell -1} \int _0^m \frac{C}{im} + \frac{Cm}{im} +\frac{\bar g_{i+1} (x)^2}{x} dx  \le \frac{Cm\ell }{\log n},
    \end{split}
    \end{equation}
    where in the second inequality we expanded the square and used that $|\bar f(im)|\le C$ and that $\bar g_i(x)\le 2m$ for any $x\in [0,m]$, and in the last inequality we used \eqref{eq:integral}. The second estimate in \eqref{eq:64} is identical.

    Finally, we note the $f$ constructed in Proposition~\ref{prop:1} with $\ell :=\lfloor n/m \rfloor$ will not necessarily satisfy $f(\ell m)=a$. However, this can easily be adjusted by applying Lemma~\ref{lem:12} with a better choice of $a$ in the last two intervals. The details are omitted.
\end{proof}

\section{Global optimizers}\label{sec:global}

In this section we finish the proof of Theorem~\ref{thm:main}. This is done by integrating our local estimates from Section~\ref{sec:local} along the curve $\Omega $. For $f\in \mathcal S$ define 
$$\Theta _N (f):= \frac{1}{8}\theta _N(f) +\tilde \theta (f).$$

\subsection{Lower bound}

\begin{prop}\label{prop:lower}
    For any function $f\in \mathcal S _N$ we have that  
    \begin{equation}
        \Theta _N(f) \ge 2\sqrt{N} \int _{-1}^{1}\sigma ^{\Omega '(x)}dx -\frac{C\sqrt{N}}{\log N},
    \end{equation}
    where $\sigma ^{\rho }$ is the constant from Theorem~\ref{thm:local}.
\end{prop}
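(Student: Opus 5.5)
The plan is to localize: cut the main part of $[-2\sqrt N,2\sqrt N]$ into windows of length $n$, keep only the diagonal-block contributions of $\theta_N(f)$ and $\tilde\theta(f)$ on each window, and compare each window with the local problem of Section~\ref{sec:local}. Since every term in $\theta_N$ is a nonnegative square and every $\varphi(h_{i,j})\ge 0$, discarding off-diagonal blocks only lowers the quantity, which is the right direction for a lower bound.

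Concretely, fix $n=\lfloor N^{1/2}/(\log N)^{2}\rfloor$ (any $n$ with $N^{1/2-c}\le n\ll \sqrt N/\log N$ should do) and a small margin $\delta=1/\log N$. Consider consecutive intervals $J_k=[z_k,z_k+n]$ covering $[-2\sqrt N(1-\delta),2\sqrt N(1-\delta)]$. Then
\begin{equation*}
\Theta_N(f)\ge \sum_k\Big(\tfrac18\int_{J_k}\int_{J_k}\Big(\tfrac{f(x)-f(y)}{x-y}-\tfrac{\Omega_N(x)-\Omega_N(y)}{x-y}\Big)^2dx\,dy+\sum_{\text{cells in }J_k}\varphi(h_{i,j})\Big),
\end{equation*}
where the cells in $J_k$ are those whose hooks have both endpoints in $J_k$, exactly the cells counted by $\tilde\theta_n$ for the restriction $g_k(x)=f(z_k+x)-f(z_k)\in\mathcal S_n^{\rm loc}$. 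These cell sets are disjoint for different $k$. Let $\rho_k=\Omega_N'(z_k)$ and $\epsilon(x,y)=\frac{\Omega_N(x)-\Omega_N(y)}{x-y}-\rho_k$. On $J_k$ we have $|\epsilon|\le n\sup_{J_k}|\Omega_N''|\le C n/(\sqrt N\sqrt{\delta})=:\eta$, because $\Omega_N''(x)=\frac{2}{\pi\sqrt{4N}}(1-x^2/4N)^{-1/2}$. Using $(a-\epsilon)^2\ge (1-\tau)a^2-\tau^{-1}\epsilon^2$, each diagonal block is at least $(1-\tau)\theta_n^{\rho_k}(g_k)-\tau^{-1}\eta^2n^2$. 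Hence the contribution of $J_k$ is at least $(1-\tau)\Theta^{\rho_k}_n(g_k)-C\tau^{-1}\eta^2n^2\ge(1-\tau)\sigma^{\rho_k}_n-C\tau^{-1}\eta^2 n^2$. Theorem~\ref{thm:local} then gives $\sigma_n^{\rho_k}\ge n\sigma^{\rho_k}-Cn/\log n$.

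Summing over the roughly $4\sqrt N/n$ windows, the main term is $(1-\tau)\sum_k n\sigma^{\rho_k}$, a Riemann sum for $2\sqrt N\int_{-1+\delta}^{1-\delta}\sigma^{\Omega'(x)}dx$ after the substitution $x\mapsto x/(2\sqrt N)$. The errors are the following.
\begin{itemize}
\item The $C/\log n$ term from Theorem~\ref{thm:local} contributes $C\sqrt N/\log N$.
\item The curvature term contributes $\tau^{-1}\eta^2 n\sqrt N\le C\tau^{-1}n^3/(\sqrt N\delta)$, which is tiny for this choice of $n$.
\item The factor $\tau$ costs $\tau\cdot C\sqrt N$ by Lemma~\ref{lem:11}, so I take $\tau=1/\log N$.
\item The discarded margins $|x|\ge 1-\delta$ cost at most $C\sqrt N\delta$, since $\sigma^\rho\le C$.
\end{itemize}

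The remaining error is the Riemann-sum discretization, which needs a modulus of continuity for $\rho\mapsto\sigma^\rho$; Theorem~\ref{thm:local} only asserts continuity. I expect this to be the main obstacle. My first attempt is to get a quantitative bound directly: $\sigma_n^\rho/n$ is a minimum of quadratics in $\rho$, and for a minimizer $f$ we have $\partial_\rho\theta^\rho_n(f)=-2\int\int(\frac{f(x)-f(y)}{x-y}-\rho)\,dx\,dy$. By Cauchy--Schwarz this is at most $Cn\sqrt{\theta_n^\rho}\le Cn^{3/2}$, which is too weak. A better route is to integrate first in one variable: $\int_0^n\frac{f(x)-f(y)}{x-y}dx$ is at most $C\log n$ times the local fluctuation, giving a modulus like $|\sigma^\rho-\sigma^{\rho'}|\le C|\rho-\rho'|\log(1/|\rho-\rho'|)$ after optimizing $n$. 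Since consecutive $\rho_k$ differ by $O(n/\sqrt N)$ (larger near the edges, which the $\delta$ margin controls), this gives an error far below $\sqrt N/\log N$.

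If that fails, I would avoid a modulus altogether: compare the block sum with itself under a shifted grid of $z_k$, or apply Theorem~\ref{thm:local} at $\rho$ equal to the infimum of $\Omega_N'$ over $J_k$, and use monotonicity of $\sigma^\rho$ in $|\rho|$. That monotonicity would itself need proof, so it is less attractive. Either way, combining the error bounds above yields the claimed bound $2\sqrt N\int_{-1}^1\sigma^{\Omega'}-C\sqrt N/\log N$.
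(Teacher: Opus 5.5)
The architecture is the paper's: windows, only the diagonal blocks of $\theta_N$ and the cells with both hook ends in one window, slope frozen at $\rho_k$, Theorem~\ref{thm:local}, then a sum. However, two quantitative steps fail as written.

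\textbf{The window length is too long.} With $n\approx\sqrt N/(\log N)^2$ and $\delta=\tau=1/\log N$, your own curvature bound $C\tau^{-1}n^3/(\sqrt N\delta)$ is of order $N/(\log N)^4$. That swamps the main term, which is of order $\sqrt N$, so it is not ``tiny''. This is not an artifact of the $\tau$-splitting. Since $\Omega_N''\ge c/\sqrt N$, the error $\epsilon(x,y)$ really is of size $n/\sqrt N$ on a window. Take $f$ within $O(1)$ of $\Omega_N$, e.g.\ the maximal element of $\mathcal S$ below $\Omega_N$. Its diagonal block is $O(n)$, while $\theta_n^{\rho_k}(g_k)\asymp n^4/N\gg n$. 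So any inequality of your form must lose about $\iint_{J_k^2}\epsilon^2$ per window, i.e.\ about $n^3/\sqrt N$ in total. You need $n\ll N^{1/3}$ up to logarithms. Since $\log n\asymp\log N$ for every $n=N^c$, long windows gain nothing. The paper takes $n=\lfloor N^{0.01}\rfloor$ and discards margins of length $N^{1/2-\epsilon}$; there $\Omega''(z)\le C(1-|z|)^{-1/2}$ keeps the total slope-freezing error at $O(N^{3\epsilon})$.

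\textbf{The Riemann-sum step is flagged but not resolved.} The modulus $C|\rho-\rho'|\log(1/|\rho-\rho'|)$ does not follow from your sketch, for two reasons.
\begin{enumerate}
\item Bounding $\frac1n\iint\big(\frac{f(x)-f(y)}{x-y}-\rho\big)$ by $C\log n$ requires $O(1)$ fluctuations of the minimizer. Lemma~\ref{lem:flat} only gives $n^{2/3}$, and the paper stresses that strong flatness is not available.
\item Passing from $\sigma_n^\rho/n$ to $\sigma^\rho$ always costs the $C/\log n$ of Theorem~\ref{thm:local}.
\end{enumerate}
So this route gives at best $|\sigma^\rho-\sigma^{\rho'}|\le C/\log(1/|\rho-\rho'|)$. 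With your $n$ the slope moves by about $(\log N)^{-3/2}$ across a window, so that bound is useless.

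The missing idea is to average over slopes before minimizing. For the fixed restriction $g_k$, the map $\rho\mapsto\Theta^\rho_n(g_k)$ is $Cn^2$-Lipschitz. Indeed $(q-\rho)^2-(q-\rho')^2=(\rho'-\rho)(2q-\rho-\rho')$ with $|q|\le 1$, and $\tilde\theta_n$ does not depend on $\rho$. Using $\Theta_n^\rho(g_k)\ge\sigma_n^\rho$ for every $\rho$ and Theorem~\ref{thm:local} pointwise in $x$, you get
\[
\Theta_n^{\rho_k}(g_k)\ge\frac1n\int_{J_k}\sigma_n^{\Omega_N'(x)}\,dx-Cn^2\sup_{x\in J_k}|\Omega_N'(x)-\rho_k|\ge\int_{J_k}\sigma^{\Omega_N'(x)}\,dx-\frac{Cn}{\log n}-Cn^2\sup_{x\in J_k}|\Omega_N'(x)-\rho_k|.
\]
This produces the integral directly, with no continuity of $\rho\mapsto\sigma^\rho$ needed. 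The last term is negligible for $n=N^{0.01}$.

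Alternatively, the same Lipschitz bound gives $|\sigma^\rho-\sigma^{\rho'}|\le C/\log n'+Cn'|\rho-\rho'|$ for every $n'$. That suffices once the slope varies by only $N^{-c}$ per window, which again requires polynomially short windows.

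With these two changes, the rest of your error accounting goes through: the $\tau$-loss via Lemma~\ref{lem:11}, the margins, and the $C/\log n$ term.
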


\begin{proof}
Let $f\in \mathcal S_N$ and let $n=\lfloor N^{\epsilon } \rfloor $ with $\epsilon =0.01$. Let $x_0:=-\lfloor 2\sqrt{N} -N^{1/2-\epsilon }\rfloor $ and  $x_i:=x_0+in$ and let $\ell \le CN^{1/2-\epsilon }$ be the last $i$ with $x_i \le \lfloor 2\sqrt{N} -N^{1/2-\epsilon }\rfloor$. For $1\le i\le \ell$ define $g_i\in \mathcal S^{\rm loc}_n$ by $g_i(x):=f(x_{i-1}+x)-f(x_{i-1})$ and let $\rho _i:=\Omega _N '(x_{i-1})$. We have that 
\begin{equation}\label{eq:72}
    \theta _N (f) \ge \sum  _{i=1}^{\ell } \int _{x_{i-1} } ^{x_i} \int _{x_{i-1} } ^{x_i}  \Big( \frac{f(x)-f(y)}{x-y} -\frac{\Omega _N(x)-\Omega _N(y)}{x-y}\Big) ^2 dx\, dy.
\end{equation}
Next, note that $\Omega ''(z)\le C/\sqrt{1-|z|}$ for any $z\in [-1,1]$ and therefore for any $1\le i\le \ell $ and $x,y\in [x_{i-1},x_i]$ we have
\begin{equation}\label{eq:taylor}
\begin{split}
    \Omega _N (x)-\Omega _N (y)&= \Omega _N'(y)(x-y)+O\big( (x-y)^2 N^{-1/2+\epsilon /2 } \big)\\
    &= \rho _i(x-y) +O\big( |x-y|nN^{-1/2+\epsilon /2} \big),
\end{split}
\end{equation}
where in the first equality we Taylor expanded $\Omega _N$ around $y$ and in the second equality we Taylor expanded $\Omega _N'$ around $x_{i-1}$. Dividing the last estimate by $x-y$ and substituting in \eqref{eq:72} we obtain that
\begin{equation}\label{eq:74}
\begin{split}
    \theta _N (f) &\ge \sum  _{i=1}^{\ell } \int _{0} ^{n} \int _{0} ^{n}  \Big( \frac{g_i(x)-g_i(y)}{x-y} -\rho _i +O(N^{-1/2+2\epsilon }) \Big) ^2 dx\, dy.\\
    &\ge \sum  _{i=1}^{\ell } \int _{0} ^{n} \int _{0} ^{n}  \Big( \frac{g_i(x)-g_i(y)}{x-y} -\rho _i\Big) ^2 -CN^{-1/2+2\epsilon }  dx\, dy,
\end{split}
\end{equation}
where in the last inequality we used that $\big| \frac{g_i(x)-g_i(y)}{x-y}-\rho _i\big| \le C$. Hence, 
\begin{equation}\label{eq:2.8}
    \theta _N (f) \ge  \sum  _{i=1}^\ell  \Big( \theta _n ^{\rho _i}(g_i)  -CN^{-1/2+4\epsilon }  \Big) \ge -CN^{3\epsilon }+ \sum  _{i=1}^{\ell } \theta _n^{\rho _i} (g_i).
\end{equation}
We clearly also have that $\tilde \theta (f)\ge \sum _{i=1}^\ell \tilde \theta (g_i)$ and therefore
\begin{equation}\label{eq:sumofTheta}
    \Theta _N (f) \ge -CN^{3\epsilon } + \sum _{i=1}^{\ell } \Theta _n^{\rho _i} (g_i).
\end{equation}
Using again that $\Omega ''(z)\le C/\sqrt{1-|z|}$ for $z\in [-1,1]$ we obtain that $\rho _{i+1}-\rho _i\le CN^{-1/2+3\epsilon /2}$ for all $i\le \ell $ and therefore, as in \eqref{eq:74} and \eqref{eq:2.8}, for all $\rho \in [\rho _i,\rho _{i+1}]$  we have  $|\theta _n^{\rho _i} (g_i)-\theta _n^{\rho } (g_i)| \le CN^{-1/2+4\epsilon }$. Thus, using also that $\tilde \theta (g_i)$ is independent of $\rho $, we obtain
\begin{equation}
\begin{split}
    \Theta _n^{\rho _i} (g_i)&\ge \frac{1}{n} \int _{x_{i-1}}^{x_i} \Theta _n^{\Omega_N' (x) } (g_i) dx -CN^{-1/2+4\epsilon } \\
    &\ge  \frac{1}{n} \int _{x_{i-1}}^{x_i} \sigma _n^{\Omega_N' (x) } dx -CN^{-1/2+4\epsilon }
    \ge \int _{x_{i-1}}^{x_i} \sigma ^{\Omega_N' (x) } dx -\frac{Cn}{\log N} ,
\end{split}
\end{equation}
where in the last inequality we used Theorem~\ref{thm:local}. Substituting this in \eqref{eq:sumofTheta} we obtain
\begin{equation*}
\begin{split}
    \Theta _N (f)&\ge - \frac{C\sqrt{N}}{\log N} +\int _{x_0}^{x_\ell } \sigma ^{\Omega_N' (x) } dx \\
    &\ge -\frac{C\sqrt{N}}{\log N} + \int _{-2\sqrt{N}}^{2\sqrt{N} } \sigma ^{\Omega_N' (x) }   dx = - \frac{C\sqrt{N}}{\log N}  + 2\sqrt{N}\int _{-1}^{1} \sigma ^{\Omega' (x) }  dx,
\end{split}
\end{equation*}
where in the second inequality we used Lemma~\ref{lem:11}. 
\end{proof}

\subsection{Upper bound}

\begin{prop}\label{prop:upper}
    There exists a function $f\in \mathcal S _N$ such that
    \begin{equation}
        \Theta _N(f)\le 2\sqrt{N}\int _{-1}^{1} \sigma ^{\Omega' (x) }  dx +\frac{C\sqrt{N}}{\log N}
    \end{equation}
    and such that $f(x)=|x|$ when $|x|\ge 2\sqrt{N}$.
\end{prop}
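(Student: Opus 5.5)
We construct $f$ by concatenating the flat near-minimizers of Corollary~\ref{cor:lem1} along the curve $\Omega_N$, and bound the interaction terms the same way the proof of Proposition~\ref{prop:1} does.

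\textbf{Setup.} Fix $n=\lfloor N^{\epsilon}\rfloor$ with $\epsilon=0.01$. Partition $[-2\sqrt N,2\sqrt N]$ into a bulk region and two edge regions. The bulk region is $[-2\sqrt N+N^{1/2-\epsilon},\,2\sqrt N-N^{1/2-\epsilon}]$, split into consecutive blocks $[x_{i-1},x_i]$ whose lengths $m_i\in[n/2,n]$ are supplied by Corollary~\ref{cor:lem1} at slope $\rho_i=\Omega_N'(x_{i-1})$. On block $i$ we place the function $g_i$ from that corollary. We choose its endpoint $a_i$ adaptively, as in Proposition~\ref{prop:1}, so that $|f(x_i)-\Omega_N(x_i)|\le 2$ for all $i$. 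The adaptive choice compares $f(x_{i-1})+\rho_i m_i$ with $\Omega_N(x_i)$; the curvature error $m_i^2\Omega_N''$ is tiny, so a choice with $|a_i-\rho_i m_i|\le 10$ is always available.

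On each edge region, of length $N^{1/2-\epsilon}$, we let $f$ be any admissible path staying within $O(1)$ of $\Omega_N$. For instance, take the maximal step function lying below $\Omega_N$, and glue it to $|x|$ at $\pm2\sqrt N$.

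Finally we fix the area so that $N(f)=N$. The area discrepancy is $O(N^{1/2})$ cells, because $\bar f=f-\Omega_N$ has small mean on each block; this follows from $\int \bar g^2/x\le Cn/\log n$ and Cauchy--Schwarz. We correct it by adding or removing $O(1)$-height bumps spread over $O(\sqrt N)$ positions, or by slightly perturbing the $a_i$ (adding a few $\pm2$ adjustments), and check that this costs $O(\sqrt N/\log N)$.

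\textbf{Diagonal blocks.} By Taylor expansion exactly as in \eqref{eq:taylor} and \eqref{eq:74}, the contribution of block $i$ is $\Theta_{m_i}^{\rho_i}(g_i)+O(N^{-1/2+4\epsilon})$. Corollary~\ref{cor:lem1} together with Theorem~\ref{thm:local} bounds this by $m_i\sigma^{\rho_i}+Cn/\log n$. Continuity of $\rho\mapsto\sigma^\rho$ turns the sum into $\int\sigma^{\Omega_N'(x)}dx=2\sqrt N\int_{-1}^1\sigma^{\Omega'}$.

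\textbf{Edge regions.} These cost at most $CN^{1/2-\epsilon}$. The local cost per unit length there is small by Lemma~\ref{lem:11}, since $1-|\rho|$ is small. Some care is needed because $\Omega_N''$ blows up there. A cruder bound suffices: $\bar f=O(1)$ on a region of length $N^{1/2-\epsilon}$ gives an $O(N^{1/2-\epsilon}\log N)$ contribution.

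\textbf{Cross terms (main obstacle).} For pairs $(x,y)$ in different blocks we write $\bar f=f-\Omega_N$. The integrand becomes $\big((\bar f(x)-\bar f(y))/(x-y)\big)^2$ exactly, with no need to compare with $\rho$.

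For adjacent blocks, split at the common endpoint $x_i$ as in the bound on $I_{i,i+1}$. Use $|\bar f(x_i)|\le2$ and the integral bounds \eqref{eq:64}, after replacing $\bar g_i$ by $f-\Omega_N$ within a block; this replacement costs $O(n^2/\sqrt N)$ pointwise. The adjacent-block terms then total $O(\ell n/\log n)=O(\sqrt N/\log N)$.

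For blocks at distance $j-i\ge2$, the bound $C\big(1+\int\bar g^2/x\big)/(j-i)^2$ sums to $O(\sqrt N/\log N)$. Pairs with one point in the bulk and one in an edge region, or both in edge regions, are handled the same way since $\bar f=O(1)$ there.

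The hook term $\tilde\theta$ decomposes as in \eqref{eq:tilde}. Adjacent blocks contribute $C\log n$ each, and farther blocks are summable, giving $O(N^{1/2-\epsilon}\log N)$ in total.

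The delicate point is the edge regions and the area correction. We must confirm that $\theta_N$ on the edge, where $\Omega_N$ is strongly curved, is genuinely $O(\sqrt N/\log N)$. If needed, we shrink the edge width to $N^{1/2}/\log N$ and use the crude $O(1)$-deviation bound there.
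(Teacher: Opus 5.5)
\textbf{Gap: the area correction.} Your bulk construction, edge treatment and bounds on diagonal and cross terms are essentially the paper's Lemma~\ref{lem:N'}. The step that puts $f$ into $\mathcal S_N$, however, fails as written.

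First, your discrepancy estimate is off. From $\int_0^m \bar g(x)^2/x\,dx\le Cn/\log n$, Cauchy--Schwarz only gives $\int_0^m|\bar g|\le \big(\int_0^m \bar g^2/x\,dx\big)^{1/2}\big(\int_0^m x\,dx\big)^{1/2}\le Cn^{3/2}/\sqrt{\log n}$. That is a block mean of order $\sqrt{n/\log n}$, not $O(1)$. Nothing in Corollary~\ref{cor:lem1} stops all blocks from deviating with the same sign. So after summing over the $\asymp\sqrt N/n$ blocks you only know $|N(f)-N|\le CN^{1/2+\epsilon/2}$.

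Second, height-$O(1)$ bumps on disjoint parts of $[-2\sqrt N,2\sqrt N]$, or a few $\pm2$ changes of the $a_i$, move the area by at most $O(\sqrt N)$, so they cannot absorb this. Third, the cost of the correction is asserted rather than estimated. In particular you do not bound its cross term with $\bar f$, which can be of size $n$ in the bulk.

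\textbf{The missing idea.} Raise $f$ by a single tall plateau. On a long interval $[0,z]$ with $z\le\sqrt N$, replace $f$ by $f+R$ with $R\asymp N^{\epsilon}$, tapering at both ends with slopes $\pm1$. This adds area of order $RN^{1/2}$, which is enough: $|f-\Omega_N|\le CN^{\epsilon}$ already gives $|N(f)-N|\le CN^{1/2+\epsilon}$.

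The cost is small because $\theta_N$ only involves differences. Pairs with both points on the flat part $[2R,z-2R]$, or both outside $[0,z]$, contribute nothing to the change. Since $|f-\Omega_N|\le CN^{\epsilon}$ holds before and after the shift, the remaining pairs (concentrated near the two ends) cost $O(N^{4\epsilon})$. Likewise $\tilde\theta$ changes by only $O(N^{\epsilon})$.

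Exact area $N$ then follows from a discrete intermediate value argument. Vary $z$ and the position of a single down-step near the right end of the plateau, so that each elementary move changes the area by $0$ or $1$.

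\textbf{A smaller point.} Mere continuity of $\rho\mapsto\sigma^\rho$ gives no rate for your Riemann sum. To get $O(\sqrt N/\log N)$, keep $\sigma_n^{\rho_i}$ rather than $\sigma^{\rho_i}$ in the block bound. Then use that $\rho\mapsto\sigma_n^\rho$ is $Cn^2$-Lipschitz and that $|\rho_{i+1}-\rho_i|\le CN^{-1/2+3\epsilon/2}$ in the bulk. Only after that apply \eqref{eq:23}, i.e.\ $\sigma_n^\rho\le n\sigma^\rho$.
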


For the proof of the proposition, we will need the following lemma in which we construct a function with small $\Theta _N$ corresponding to a partition of size which is not exactly $N$. We will later adjust this function to be of size $N$ without increasing $\Theta _N$ by much. Throughout this section we fix $\epsilon =0.01$ as in the proof of Proposition~\ref{prop:lower}.

\begin{lem}\label{lem:N'}
    There exists $f\in \mathcal S$ such that 
    \begin{equation}
    \Theta _N(f)\le 2\sqrt{N}\int _{-1}^{1} \sigma ^{\Omega' (x) }  dx +\frac{C\sqrt{N}}{\log N}  
    \end{equation}
    and such that $|f(x)-\Omega _N(x)|\le N^{\epsilon }$ for all $x$ and $f(x)=|x|$ for $|x|\ge 2\sqrt{N}$.
\end{lem}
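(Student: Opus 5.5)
We construct $f$ by concatenating flat near-optimizers from Corollary~\ref{cor:lem1} along $\Omega_N$, mirroring the proof of Proposition~\ref{prop:lower} in reverse. Fix $n=\lfloor N^{\epsilon}\rfloor$. Split $[-2\sqrt N,2\sqrt N]$ into a bulk region $|x|\le 2\sqrt N-N^{1/2-\epsilon}$ and two edge regions.

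\emph{Bulk.} Starting from the left endpoint of the bulk region, we place consecutive blocks. At block $i$, with left endpoint $x_{i-1}$, set $\rho_i=\Omega_N'(x_{i-1})$. Apply Corollary~\ref{cor:lem1} with this $\rho_i$ and $n$; this gives a length $m_i\in[n/2,n]$ and a function $g_i$. Set $x_i=x_{i-1}+m_i$. Choose the endpoint value $a$ (within distance $2$ of $\rho_i m_i$, with the right parity) so that $|f(x_i)-\Omega_N(x_i)|\le 2$ is maintained inductively. This is possible because $\Omega_N(x_i)-\Omega_N(x_{i-1})=\rho_i m_i+O(n^2N^{-1/2+\epsilon/2})=\rho_i m_i+o(1)$.

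\emph{Edges.} On the edge regions, and outside $[-2\sqrt N,2\sqrt N]$, we let $f$ be the maximal lattice path below (or the rounding of) $\Omega_N$, equal to $|x|$ for $|x|\ge 2\sqrt N$. There $1-|\Omega_N'|$ is small, so Lemma~\ref{lem:11} bounds the local cost by $Cn(1-|\rho|)\log\frac{2}{1-|\rho|}$. Integrated over a region of length $N^{1/2-\epsilon}$ where $1-|\rho|\le CN^{-\epsilon}$, this gives $o(\sqrt N/\log N)$. Since $|f-\Omega_N|\le 2$ at block endpoints and $|\bar g_i|\le Cn$ inside blocks, we get $|f-\Omega_N|\le N^{\epsilon}$ everywhere.

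\emph{Bounding $\Theta_N(f)$.} Decompose $\theta_N(f)$ into diagonal block terms and interaction terms $I_{i,j}$. For the diagonal terms, the Taylor expansion \eqref{eq:taylor} replaces $\Omega_N$ by slope $\rho_i$ at cost $CN^{-1/2+4\epsilon}$ per block. This gives $\sum_i\Theta_{m_i}^{\rho_i}(g_i)\le\sum_i m_i\big(\sigma^{\rho_i}+C/\log n\big)$ by \eqref{eq:theta63} and \eqref{eq:23}. The sum is a Riemann sum for $\int\sigma^{\Omega_N'(x)}dx=2\sqrt N\int_{-1}^1\sigma^{\Omega'}$, up to $C\sqrt N/\log N$; this uses continuity of $\rho\mapsto\sigma^\rho$, or better, the approximate Lipschitz estimate in $\rho$ used in Proposition~\ref{prop:lower}.

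For neighbouring blocks, $I_{i,i+1}$ is handled exactly as in Proposition~\ref{prop:1} using \eqref{eq:64}, giving $Cn/\log n$ each and hence $C\sqrt N/\log N$ in total. Hook interactions $S_{i,j}$ contribute $C\log n$ per adjacent pair and a summable amount otherwise, which is negligible.

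\emph{Main obstacle: distant pairs.} The hard step is the pairs $x,y$ in far-apart blocks, or with one point in an edge region. There we must control
\[
\Big(\frac{(f-\Omega_N)(x)-(f-\Omega_N)(y)}{x-y}\Big)^2 .
\]
The difference from Proposition~\ref{prop:1} is that we now subtract the curved $\Omega_N$ rather than a line. Within a block, $f-\Omega_N=\bar g_i+O(1)$, since the curvature error is $O(n^2N^{-1/2+\epsilon/2})=o(1)$. So the same computation as for $j\ge i+2$ gives $I_{i,j}\le Cn/((j-i)^2\log n)$ via \eqref{eq:64}. Summing over $j$ gives $C\sqrt N/\log N$ overall.

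Near the edges, where $\Omega''$ blows up like $(1-|z|)^{-1/2}$, I would instead rely on the crude bound $|f-\Omega_N|\le N^{\epsilon}$. The edge pairs are then bounded by roughly $N^{2\epsilon}\log N$, or by whatever the square-root singularity allows, and the aim is to show this is $o(\sqrt N/\log N)$. I expect the edge bookkeeping to be the fiddliest part, but not conceptually hard.
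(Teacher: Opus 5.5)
Your proposal is correct and is essentially the paper's proof. It concatenates blocks from Corollary~\ref{cor:lem1} at slopes $\rho_i=\Omega_N'(x_{i-1})$ across the bulk and uses $\tilde\Omega_N$ on the two edge strips of length $N^{1/2-\epsilon}$; there the bound $|f-\Omega_N|\le 2$ alone gives $O(N^{1/2-\epsilon})$, so Lemma~\ref{lem:11} is not needed. Interactions are bounded as in Proposition~\ref{prop:1}. For the $1/\log N$ rate in the Riemann sum, compare $\sigma_n^{\rho}$ at the fixed scale $n$ before passing to $\sigma^\rho$ via \eqref{eq:23}, as the paper does; continuity of $\rho\mapsto\sigma^\rho$ alone gives no rate.

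One detail to tidy is the endpoint choice. Take $a=\tilde\Omega_N(x_i)-\tilde\Omega_N(x_{i-1})$ as the paper does; it has the right parity and satisfies $|a-\rho_i m_i|\le 5$. This forces $f(x_i)=\tilde\Omega_N(x_i)$ exactly, which avoids drift from the curvature error and makes $f$ glue continuously to $\tilde\Omega_N$ at $x_\ell$.
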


\begin{proof}
As before, we let $n=\lfloor N^{\epsilon } \rfloor $ and let $x_0:=-\lfloor 2\sqrt{N} -N^{1/2-\epsilon }\rfloor $. We let $\tilde \Omega _N$ be the maximal function in $\mathcal S$ below $\Omega _N$ and observe that $\tilde \Omega _N(x)\in [\Omega _N(x)-2,\Omega _N(x)]$ for all $x\in \mathbb R$. We define the function $f$ as follows.  For any $x\le x_0$, we let $f(x)=\tilde \Omega _N(x)$. This implies that $f(x)=|x|$ when $x<-2\sqrt{N}$. We then define a sequence of points $x_0<x_1<\cdots <x_\ell $ inductively with $x_i-x_{i-1}\in [n/2,n]$, and construct the function $f$ between these points such that $f(x_i)=\tilde \Omega _N(x_i)$. Suppose that $x_{i-1}$ was defined, for some $i\ge 1$, as well as $f(x)$ for $x\le x_{i-1}$. Let $\rho _{i}:=\Omega _N'(x_{i-1})$ and let $m_i$ be the integer $m$ from Corollary~\ref{cor:lem1} with $\rho =\rho _i$ and $n=\lfloor N^{\epsilon } \rfloor $. We let $x_i:=x_{i-1}+m_i$. Let $g_i\in \mathcal S_{m_i}^{\rm loc}$ be the function $g$ from this corollary with $a=\tilde \Omega _N (x_i)-\tilde \Omega _N(x_{i-1})$ which satisfies
$$
|a-\rho _i m_i| \le 4+ \big| \Omega _N(x_i)-\Omega _N(x_{i-1})-\rho _i m_i \big| \le 5,
$$
where the last inequality holds when $N$ is large enough by a Taylor expansion of $\Omega _N$ around $x_{i-1}$. By Corollary~\ref{cor:lem1} we have that $\Theta _{m_i}^{\rho _i} (g_i)/m_i \le \sigma_n ^{\rho _i}/n +O(1/\log N)$. We then define $f(x)=f(x_{i-1}) +g_i(x-x_{i-1})$ for any $x\in [x_{i-1},x_i]$. We stop this process when $x_i\ge 2\sqrt{N}-N^{1/2-\epsilon }$ and we let $\ell$ be the first $i$ for which that happened. Finally, define $f(x)=\tilde \Omega _N(x)$ for any $x\ge x_\ell $. Since $f(x_i)=\tilde \Omega _N(x_i)$ for all $0\le i\le \ell $ it follows that $|f(x)-\Omega _N(x)|\le n\le N^{\epsilon }$ for any $x$ and therefore for the proof of the lemma it suffices to bound $\Theta _N(f)$. As in the proof of Proposition~\ref{prop:1} we write 
\begin{equation}
    \theta _{N} (f)=\sum _{i=0}^{\ell +1}I_{i,i} +2\sum _{0\le i<j \le \ell +1 } I_{i,j} \quad \text{where} \quad   I_{i,j}:=\int _{x_{i-1}}^{x_i}\int _{x_{j-1}}^{x_j} \bigg( \frac{\bar f(x)-\bar f(y)}{x-y}  \bigg)^2 dx dy
\end{equation}
and where $x_{-1}=-\infty $ and $x_{\ell +1}=\infty $. Here $\bar f(x):=f(x)-\Omega _N(x)$. As in \eqref{eq:74} and \eqref{eq:2.8} we have that $|I_{i,i}-\theta _{m_i}^{\rho _i}(g_i)| \le CN^{-1/2+4\epsilon }$ for all $1\le i\le \ell $. Moreover, using that $|\bar f(x)|\le 2$ when $x\le x_0$ and that $\bar f(x)=0$ when $x\le -2\sqrt{N}$, we get
\begin{equation}
    I_{0,0} \le \int _{-2\sqrt{N}}^{x_0} \int _{-\infty }^{x_0} \frac{C}{(|x-y|+1)^2} dx \, dy \le C (x_0+2\sqrt{N}) \le CN^{1/2-\epsilon }.
\end{equation}
We can similarly bound $I_{\ell +1,\ell +1}$.

Next, we bound the interaction terms $I_{i,j}$ when $i\neq j$. By \eqref{eq:64} for any $1\le i\le \ell $ we have  
\begin{equation*}
    \int _{0}^{m_i} \frac{\big( g_i(x)-\rho _i x \big)^2}{x}dx \le \frac{Cn}{\log N} \quad \text{and}\quad \int _{0}^{m_i} \frac{\big( g_i(m_i)-g_i(x) -\rho_i (m_i-x) \big)^2}{m_i-x}dx\le \frac{Cn}{\log N}.
\end{equation*}
As in \eqref{eq:taylor}, we have $\Omega _N(x_{i-1}+x)-\Omega _N(x_{i-1})=\rho _ix +O(n^2N^{-1/2+\epsilon /2})$ for any $x\in [0,m_i]$ and therefore in the last bounds we may safely replace $\rho _ix$ by $\Omega _N(x_{i-1}+x)-\Omega _N(x_{i-1})$ and $\rho _i(m_i-x)$ by $\Omega _N(x_i)-\Omega _N(x_{i-1}+x)$ to obtain
\begin{equation}
    \int _{x_{i-1}}^{x_i} \frac{\big( \bar f(x)-\bar f(x_{i-1}) \big)^2}{x-x_{i-1}}dx \le \frac{Cn}{\log N} \quad \text{and}\quad \int _{x_{i-1}}^{x_i} \frac{\big( \bar f(x_i)-\bar f(x) \big)^2}{x_i-x}dx\le \frac{Cn}{\log N}.
\end{equation}
These are the only bounds that were used in the proof of Proposition~\ref{prop:1} when bounding $I_{i,j}$ and therefore, repeating the same arguments 
we obtain that $I_{i,j} \le \frac{Cn}{(j-i)^2\log N}$ for any $1\le i<j\le \ell $. Finally, using that $|\bar f(x)|\le n$ for any $x$  we obtain
\begin{equation}
    \sum _{j=1}^{\ell +1}I_{0,j} \le  \int _{x_0}^{x_{\ell +1}} \int _{-\infty }^{x_0} \frac{CN^{2\epsilon }}{(y-x+1)^2}  dx \, dy \le CN^{2\epsilon }\log N
\end{equation}
 and similarly $\sum _{i=0}^{\ell }I_{i,\ell +1} \le CN^{2\epsilon }\log N$.
 
Collecting all these bounds, we obtain as in the proof of Proposition~\ref{prop:1}, that  
\begin{equation}
    \theta _{N} (f)\le \frac{C\sqrt{N}}{\log N} +\sum _{i=1}^{\ell } \theta _{m_i}^{\rho _i}(g_i).
\end{equation}
Next, as in the proof of Proposition~\ref{prop:1}, we have that $\tilde \theta (f)\le C\ell \log N +\sum _{i=1}^{\ell} \tilde \theta _{m_i}(g_i)$ and therefore
\begin{equation}\label{eq:77}
    \Theta _{N} (f)\le \frac{C\sqrt{N}}{\log N} +\sum _{i=1}^{\ell } \Theta _{m_i}^{\rho _i}(g_i) \le \frac{C\sqrt{N}}{\log N} +\sum _{i=1}^{\ell } \frac{m_i}{n} \sigma  _{n}^{\rho _i}.
\end{equation}
The rest of the proof is similar to the lower bound. Namely, we note that for any $\rho \in [\rho _i,\rho _{i+1}]$ we have $|\sigma _{n}^{\rho _i} -\sigma _{n}^{\rho } | \le CN^{-1/2+4\epsilon }$ and therefore  
\begin{equation}
    \sigma  _{n}^{\rho _i} \le CN^{-1/2+4\epsilon } +\frac{1}{m_i} \int _{x_{i-1}}^{x_i} \sigma  _{n}^{ \Omega '_N(x)} dx \le CN^{-1/2+4\epsilon } +\frac{n}{m_i} \int _{x_{i-1}}^{x_i} \sigma  ^{ \Omega '_N(x)} dx.
\end{equation}
Substituting this in \eqref{eq:77} we obtain
\begin{equation}
    \Theta _{N} (f)\le \frac{C\sqrt{N}}{\log N} +\int _{-2\sqrt{N}} ^{2\sqrt{N}} \sigma ^{\Omega '_N(x)} dx =\frac{C\sqrt{N}}{\log N} + 2\sqrt{N}\int _{-1}^{1} \sigma ^{\Omega' (x) }  dx,
\end{equation}
as needed.
\end{proof}

\begin{proof}[Proof of Proposition~\ref{prop:upper}]
    Let $f$ be the function from Lemma~\ref{lem:N'}. Suppose without loss of generality that $N(f)<N$. The treatment when $N(f)>N$ will be identical. Observe that $N(f)$ cannot be much smaller than $N$ since
    \begin{equation}\label{eq:N'}
        N(f) =\frac{1}{2}\int _{-2\sqrt{N}}^{2\sqrt{N}}f(x)-|x|dx \ge \frac{1}{2} \int _{-2\sqrt{N}}^{2\sqrt{N}}\Omega _N(x)- N^{\epsilon }-|x|dx = N-2N^{1/2+\epsilon }.
    \end{equation}
    Next, let $R=6\lceil N^{\epsilon }\rceil $. For any integers $z\in [0,\sqrt{N}]$ and $r\ge 0$ we define the function $f_{z,r}$ as follows. When $x\notin [0,z]$ we let $f_{z,r}(x)=f(x)$. When $x\in [0,z]$ let
    \begin{equation}
        f_{z,r} (x) :=\min \big(  f(0)+x,f(x)+R, g_{z,r}(x) \big)
    \end{equation}
    where
    \begin{equation}
        g_{z,r}(x) := \begin{cases}
            f(z)+z-x ,\quad &x\in [z-r,z] \\
            \max \big( f(x), f(z)+r-(z-r-x) \big), \quad &x\in [z-r-1,z-r] \\
            \max \big( f(x), f(z)-2+z-x \big)  ,\quad &x\le z-r-1
        \end{cases}.
    \end{equation}
    This is the function that starting from $(z,f(z))$, as we go to the left from $z$ it takes $r$ steps up, then one step down and then all remaining steps up (we also maximize with $f(x)$ to make sure the down step doesn't make $g_{z,r}$ go below $f$). We clearly have that $f_{z,r}\in \mathcal S$.

\begin{figure}[ht]
\centering


\def\PartList{29,26,23,20,20,18,17,16,15,13,12,12,11,10,9,7,7,7,6,5,4,3,2,2,1,1,1,1,1,1}

\pgfmathsetmacro{\cell}{0.12}
\pgfmathsetmacro{\R}{2*sqrt(353)}   

\newcommand{\DrawYoungRussian}{
    \foreach[count=\i from 1] \len in \PartList {
        \foreach \j in {1,...,\len} {
            \draw (\j-\i,\i+\j-2) --
                  (\j-\i+1,\i+\j-1) --
                  (\j-\i,\i+\j) --
                  (\j-\i-1,\i+\j-1) -- cycle;
        }
    }
}

\newcommand{\DrawUpperBoundary}{
    \draw[line width=1pt]
    (-30,30) --
    (-29,31) --
    (-28,30) --
    (-27,29) --
    (-26,28) --
    (-25,27) --
    (-24,26) --
    (-23,25) --
    (-22,26) --
    (-21,25) --
    (-20,24) --
    (-19,25) --
    (-18,24) --
    (-17,25) --
    (-16,24) --
    (-15,25) --
    (-14,24) --
    (-13,25) --
    (-12,24) --
    (-11,25) --
    (-10,24) --
    (-9,23) --
    (-8,22) --
    (-7,23) --
    (-6,24) --
    (-5,23) --
    (-4,24) --
    (-3,23) --
    (-2,24) --
    (-1,23) --
    (0,24) --
    (1,23) --
    (2,22) --
    (3,23) --
    (4,22) --
    (5,23) --
    (6,24) --
    (7,23) --
    (8,24) --
    (9,23) --
    (10,24) --
    (11,23) --
    (12,24) --
    (13,23) --
    (14,24) --
    (15,25) --
    (16,24) --
    (17,23) --
    (18,24) --
    (19,25) --
    (20,26) --
    (21,25) --
    (22,26) --
    (23,27) --
    (24,28) --
    (25,27) --
    (26,28) --
    (27,29) --
    (28,30) --
    (29,29);
}

\begin{tikzpicture}[x=\cell cm,y=\cell cm,line width=0.35pt]

    \DrawYoungRussian

    \DrawUpperBoundary

    \draw[line width=1pt] (-30,30) -- (-\R-1.0,\R+1.0);
    \draw[line width=1pt] (29,29) -- (\R+1.0,\R+1.0);

    \draw[red,thick,samples=500,smooth,variable=\t,domain=-90:90]
        plot ({\R*sin(\t)},
              {(2*\R/pi)*((\t*pi/180)*sin(\t) + cos(\t))});

    \draw[densely dotted,line width=1pt]
    (0,24) --
    (6,30) --
    (7,29) --
    (8,30) --
    (9,29) --
    (10,30) --
    (11,29) --
    (12,30) --
    (13,29) --
    (15,31) --
    (18,28) --
    (19,29) --
    (22,26);

    \node[scale=0.85] at (11.8,25.7) {$f$};
    \node[scale=0.85] at (-18,29) {{\color{red}$\Omega_N$}};
    \node[scale=0.85] at (11.8,32) {$f_{z,r}$};

\end{tikzpicture}

\caption{The original function $f$ and its modification $f_{z,r}$ with $N(f_{z,r})=N$.}

\end{figure}

    We observe that $f(x)\le f_{z,r}(x) \le f(x)+R$ for any $x$. We have $f_{z,r}(x)=f(x)$ when $x\notin (0,z)$ and we claim that 
    \begin{equation}\label{eq:f(x)+R}
        f_{z,r}(x)=f(x)+R \quad \text{when} \quad x\in [2R,z-2R].
    \end{equation}
    To see this, suppose that $z\ge 4R$ (otherwise the statement is empty) and let $x\in [2R,z-2R]$. Using that $|f(y)-\Omega _N(y)|\le N^{\epsilon }$ for any $y$ and that $\Omega '(y)\le 1/3$ for any $y\in [0,1/2]$, we obtain
    \begin{equation*}
        f(0)+x \ge \Omega _N(0)-N^{\epsilon }+x \ge \Omega _N(x) +2x/3 -N^{\epsilon } \ge f(x)+2x/3-2N^{\epsilon } \ge f(x)+R,
    \end{equation*}
    Similarly, using also that $\Omega (y)$ is monotonically increasing when $y>0$ we obtain
    \begin{equation*}
        f(z)-2+z-x \ge \Omega _N(z)-2N^{\epsilon }+2R \ge \Omega _N(x) -2N^{\epsilon } +2R \ge f(x)-3N^{\epsilon } +2R \ge f(x)+R.
    \end{equation*}
    The equality in \eqref{eq:f(x)+R} follows from these estimates and the definition of $f_{z,r}$.

    Next, our goal is to show that for some $z,r$, we have that  $N(f_{z,r})=N$. This is done using the discrete intermediate value theorem. Observe that either $N(f_{z,r+1})=N(f_{z,r})$ or $N(f_{z,r+1})=N(f_{z,r})+1$. Moreover, $f_{z,r}=f_{z+1,0}$ as long as $r$ is sufficiently large. Hence, it suffices to prove that for $z_0:=\lfloor \sqrt{N} \rfloor $ we have that  $N(f_{z_0,0})\ge N$. Using \eqref{eq:f(x)+R} we obtain
    \begin{equation*}
    \begin{split}
        N(f_{z_0,0})=\frac{1}{2}\int _{-\infty }^{\infty } f_{z_0,0}(x)-|x|dx \ge \frac{1}{2}\int _{-\infty }^{\infty } f(x)-|x|dx +\frac{1}{2}R (z_0-4R) \\
        =N(f)+ \frac{1}{2}R (z_0-4R) \ge N,
    \end{split}
    \end{equation*}
    where in the last inequality we used \eqref{eq:N'}.

    Using the intermediate value theorem, there are $z,r$ such that $N(f_{z,r})=N$. We fix these $z,r$ from now on. For the proof of the proposition, it suffices to bound $\Theta _N (f_{z,r})$.

Let us bound $|\theta _N (f)-\theta _N (f_{z,r})|$. Define the set 
\begin{equation}
    A:=[2R,z-2R]^2\cup (\mathbb R \setminus [0,z])^2  
\end{equation}
By \eqref{eq:f(x)+R}, for $(x,y)\in A$ we have that $f(x)-f(y)=f_{z,r}(x)-f_{z,r}(y)$ and so these pairs of points will not contribute to the difference $\theta _N (f)-\theta _N (f_{z,r})$. Moreover, for any $x$ we have that $|\bar f(x)|\le N^{\epsilon }$ and $|\bar f_{z,r}(x)|\le |\bar f(x)| +R\le 9N^{\epsilon }$. Thus, we obtain
\begin{equation}
    |\theta _N (f)-\theta _N (f_{z,r})| \le \int _{\mathbb R ^2 \setminus A} \frac{CN^{2\epsilon }}{(x-y)^2+1} dx\, dy \le \int _{-\infty }^{\infty } \frac{CN^{3\epsilon }}{d_x^2+1} \, dx,
\end{equation}
where $d_x$ denotes the distance of $x$ from the set $[0,2R]\cup [z-2R,z]$. The contribution to the integral coming from $x$ with $d_x=0$ is bounded by $CRN^{3\epsilon } \le CN^{4\epsilon }$ while the contribution coming from $x$ with $d_x>0$ is bounded by $CN^{3\epsilon }$. We get that $|\theta _N (f)-\theta _N(f_{z,r})|\le CN^{4\epsilon }$. 

Next, we bound $|\tilde \theta (f)-\tilde \theta  (f_{z,r})|$. Each pair of integers  $x,y\in [-2\sqrt{N},2\sqrt{N}]$ contributes to $\tilde \theta (f)$ and $\tilde \theta  (f_{z,r})$ at most $C(x-y)^{-2}$ (each pair $(x,y)$ corresponds to the cell whose hook ends at $(x,f(x))$ and $(y,f(y))$). Pairs $(x,y)$ such that either $x,y\in [2R,z-2R]$ or $x,y\notin [0,z]$ will not contribute to the difference $\tilde \theta  (f)-\tilde \theta  (f_{z,r})$. Hence the number of pairs $(x,y)$ with $|x-y|=h$ that contribute to the difference is bounded by $CN^{\epsilon }+Ch$. This gives that $|\tilde \theta  (f)-\tilde \theta  (f_{z,r})|\le CN^{\epsilon }$. We obtain that 
\begin{equation}
    \Theta  _{N} (f_{z,r}) \le \Theta  _{N} (f) +CN^{4\epsilon } \le  2\sqrt{N}\int _{-1}^{1} \sigma ^{\Omega' (x) }  dx + \frac{C\sqrt{N}}{\log N},
\end{equation}
where in the last inequality we used the bound in Lemma~\ref{lem:N'}. 
\end{proof}

\subsection{Proof of Theorem~\ref{thm:main}}

Substituting Proposition~\ref{prop:lower} and Proposition~\ref{prop:upper} in Proposition~\ref{prop:VK} we obtain that 
\begin{equation}
    -\log \bigg( \frac{d_N^2}{(N!)^2(N/e)^{-N}}\bigg) = 2\sqrt{N} \int _{-1}^{1} \sigma ^{\Omega' (x) }  dx +O\Big( \frac{\sqrt{N}}{\log N} \Big).
\end{equation}
Indeed, for the lower bound we use that $\bar \theta _N$ is non negative and therefore the right hand side of \eqref{eq:VK} is lower bounded by $\Theta _N(f)$. For the upper bound we used that the function $f$ in Proposition~\ref{prop:upper} satisfies $f(x)=|x|$ when $|x|\ge 2\sqrt{N}$ and therefore $\bar \theta _N(f)=0$ and the right hand side of \eqref{eq:VK} is equal to $\Theta _N(f)$.

Next, by Stirling's approximation we have that $\log (N!(N/e)^{-N})= O(\log N)$ and therefore we obtain
\begin{equation}
    -\log \bigg( \frac{d_N}{\sqrt{N!}}\bigg) = \sqrt{N} \int _{-1}^{1} \sigma ^{\Omega' (x) }  dx +O\Big( \frac{\sqrt{N}}{\log N} \Big).
\end{equation}
This finishes the proof of Theorem~\ref{thm:main} with the constant $\mathfrak d$ being  $\int _{-1}^{1} \sigma ^{\Omega' (x) }  dx$.

\bibliographystyle{abbrv}
\bibliography{tree}

\end{document}